\numberwithin{equation}{section}
\newtheorem{theo}{Theorem}
\newtheorem{prop}{Proposition}
\newtheorem{coro}{Corollary}
\newtheorem{lem}{Lemma}
\newtheorem{defi}{Definition}
\newtheorem{questno*}{Question}
\theoremstyle{remark}
\newtheorem{Remark}{Remark}
\newtheorem*{Remarks*}{Remarks}
\def\Z{\mathbb{Z}}
\def\C{\mathbb{C}}
\def\N{\mathbb{N}}
\def\Q{\mathbb{Q}}
\def\n{\eta}
\def\n'{\nu}
\def\x{\chi}
\def\det{\mathrm{det}}
\def\Gal{\mathrm{Gal}}
\def\<{\langle}
\def\>{\rangle}
\def\GL{\mathrm{GL}}
\newcommand{\Qbar}{\overline{\mathbb Q}}
\newcommand{\K}{\mathbb K}
\begin{document}

\selectlanguage{english}

\title{Representability of $G$-functions as rational functions in hypergeometric series}
\date\today
\author{T. Dreyfus and T. Rivoal}
\maketitle

\begin{abstract}
Fres\'an and Jossen have given a negative answer to a question of Siegel about the representability of every $E$-function as a polynomial with algebraic coefficients in $E$-functions of type $_pF_q[\underline{a};\underline{b};\gamma x^{q-p+1}]$ with $q\ge p\ge 0$, $\gamma \in \Qbar$ and rational parameters $\underline{a}, \underline{b}$. In this paper, we study, in a more general context, a similar question for $G$-functions asked by Fischler and the second author: can every $G$-function be represented as a polynomial with algebraic coefficients in $G$-functions of type $\mu(x)\cdot _pF_{p-1}[\underline{a};\underline{b};\lambda(x)]$ with $p\ge 1$, rational parameters $\underline{a},\underline{b}$ and $\mu,\lambda$ algebraic over $\mathbb Q(x)$ with  $\lambda(0)=0$? They have shown the answer to be negative under a generalization of Grothendieck's Period Conjecture and a technical assumption on the~$\lambda$'s. 
Using differential Galois theory, we prove that, for every $N\in \mathbb N$, there exists a $G$-function which can not be represented as a  rational function with coefficients in $\overline{\C(x)}$ of solutions of linear differential equations with coefficients in $\mathbb C(x)$ and at most $N$ singularities in $\mathbb{P}^1 (\mathbb C)$. As a corollary, we deduce that not all $G$-functions can be represented as a rational function in hypergeometric series of the above mentioned type, when the $\lambda$'s are rational functions with degrees of their numerators and denominators bounded by an arbitrarily large fixed constant. This provides an unconditional negative answer to the question asked by Fischler and the second author for such~$\lambda$'s.
\end{abstract}

\section{Introduction}

Siegel \cite{siegel} defined in 1929 the notion of $E$-functions and $G$-functions as generalizations of the exponential and logarithmic functions respectively. We denote by $\Qbar \subset \mathbb C$ the field of algebraic numbers.
\begin{defi}\label{defi1}
A power series $F(x)=\sum_{n=0}^{\infty} \frac{a_n}{n!} x^n \in \Qbar[[x]]$, respectively $F(x)=\sum_{n=0}^{\infty} a_n x^n \in \Qbar[[x]]$, is an $E$-function, respectively a $G$-function, if 
\begin{enumerate}
\item[$(i)$] $F$ is solution of a non-zero linear differential equation with coefficients in 
$\Qbar(x)$.
\item[$(ii)$] There exists $C>0$ such that for any $\sigma\in \textup{Gal}(\Qbar/\mathbb Q)$ and any $n\ge 0$, $\vert \sigma(a_n)\vert \leq C^{n+1}$.
\item[$(iii)$] There exists $D>0$ and a sequence of integers $d_n$, with $1\le d_n \leq D^{n+1}$, such that
$d_na_m$ are algebraic integers for all~$m\le n$.
\end{enumerate}
\end{defi}
Note that $(i)$ implies that the $a_n$'s all lie in a certain number field $\K$.
A $G$-function has positive radius of convergence, finite unless it is a polynomial, and it can be analytically continued to a suitable cut plane (the origin of the cuts are amongst the singularities of the differential equation satisfied by the $G$-function); algebraic functions in $\Qbar[[x]]$ are $G$-functions.  
$G$-functions form a subring of $\Qbar[[x]]$ stable by differentiation and anti-differentiation (with algebraic integration constants). 

Classical examples of $E$- and $G$-functions are given by specializations of {\em the generalized hypergeometric function} 
\begin{equation} \label{eq:defhypfn}
{}_pF_{q}
\left[
\begin{matrix}
a_1, \ldots, a_p
\\
b_1, \ldots, b_q
\end{matrix}
\,; x \right] := \sum_{n=0}^\infty \frac{(a_1)_n\cdots (a_p)_n}{(1)_n(b_1)_n\cdots (b_q)_n} x^n
\end{equation}
where $p,q\ge 0$ and we define the Pochhammer symbol $(a)_n:=a(a+1)\cdots (a+n-1)$ for $n\ge 1$, $(a)_0:=1$. We shall also denote it by 
${}_pF_{q}[a_1, \ldots, a_p; b_1, \ldots, b_q; x]$. The parameters $a_j$ and $b_j$ are {\em a priori} in $\mathbb C$, with the restriction that $b_j\notin \mathbb Z_{\le 0}$ so that $(b_j)_n\neq 0$ for all $n\ge 0$. 
 We write a list $\underline{a}:=[a_1, \ldots, a_p]=\emptyset$ or $\underline{b}:=[b_1, \ldots, b_q]=\emptyset$ if $p=0$ or $q=0$, and the corresponding empty product of Pochhammer symbols is then set equal to 1. 
Let $\delta=x\frac{d}{dx}$. The hypergeometric series \eqref{eq:defhypfn} satisfies the {\em hypergeometric equation}
\begin{equation}\label{eq2}
\delta (\delta +b_1-1)\cdots (\delta +b_{q}-1)-x(\delta+a_1 )\cdots (\delta+a_p)y=0.
\end{equation}

 Siegel proved in 
\cite[\S9]{siegellivre} that, for any integers $q\ge p\ge 0$, the series 
\begin{equation}
\label{eq:Efnhyp}
{}_{p}F_q \left[ 
\begin{matrix}
a_1, \ldots, a_{p}
\\
b_1, \ldots, b_q
\end{matrix}
; x^{q-p+1}
\right] 
\end{equation}
is an $E$-function  when $a_j\in \mathbb Q$ and $b_j\in \mathbb Q\setminus \mathbb Z_{\le 0}$ for all $j$.  Similarly, for any integer $p\ge 1$, the series 
\begin{equation}
\label{eq:Gfnhyp}
{}_{p}F_{p-1} \left[ 
\begin{matrix}
a_1, \ldots, a_{p}
\\
b_1, \ldots, b_{p-1}
\end{matrix}
; x
\right] 
\end{equation}
is a $G$-function  when $a_j\in \mathbb Q$ and $b_j\in \mathbb Q\setminus \mathbb Z_{\le 0}$ for all $j$. More generally, any function of the form $\mu(x)\cdot {}_{p}F_{p-1}[a_1, \ldots, a_p; b_1, \ldots, b_{p-1}; \lambda(x)]$ is a $G$-function when $\lambda, \mu$ are algebraic functions in $\Qbar[[x]]$, with $\lambda(0)=0$. Note that in both cases, the rationality of the parameters is sufficient condition to obtain  $E$ and $G$-functions, but not a necessary one, as the following examples show (where $\alpha \in \Qbar^*)$:
\begin{equation}\label{eq:examplesnonrat}
{}_{2}F_1 \left[ 
\begin{matrix}
\alpha+1, 1
\\
\alpha
\end{matrix}
; x
\right] = \frac{\alpha(1-x)+x}{\alpha(1-x)^2}, \quad {}_{1}F_1 \left[ 
\begin{matrix}
\alpha+1
\\
\alpha
\end{matrix}
; x
\right] = \frac{x+\alpha}{\alpha}e^x.
\end{equation}
Galochkin has given in \cite{galochkin} necessary and sufficient conditions on the parameters of {\em non-terminating} hypergeometric series to be $E$- or $G$-functions; in particular these parameters must be algebraic.

\medskip

Siegel stated  \cite[p. 225]{siegel} a problem that we reformulate as the following question.
\begin{questno*}[Siegel] \label{questionE}
Is it possible to write any $E$-function as a polynomial with coefficients in $\Qbar$ of  $E$-functions of the form ${}_{p}F_q[a_1, \ldots, a_p; b_1,\ldots, b_q ;\gamma x^{q-p+1}]$, with $q\ge p\ge 0$, 
$a_j\in \mathbb Q$,  $b_j\in \mathbb Q\setminus \mathbb Z_{\le 0}$ and $\gamma\in \Qbar$?
\end{questno*}
The parameters $p,q, \gamma$ can take different values in the various hypergeometric series in the polynomial. This question has recently been answered in the negative by Fres\'an-Jossen \cite{frejos} who exhibited an explicit $E$-function which cannot be written as such a polynomial in hypergeometric series. A little earlier, a strong reason towards a negative answer had been given by  Fischler and the second author in \cite{firi3}. This strong reason is based on the incompatibility of a positive answer and a widely believed generalization to exponential periods of Grothendieck's Period Conjecture. In the same paper, the following question was also asked for $G$-functions, in the spirit of Siegel's question above.

\begin{questno*}[Fischler-Rivoal]\label{questionG}
Is it possible to write any $G$-function as a polynomial with coefficients in $\Qbar$ of $G$-functions of the form $\mu(x)\cdot{}_{p}F_{p-1}[a_1, \ldots, a_{p}; b_1,\ldots, b_{p-1} ;\lambda(x)]$, with $p \ge 1$, $a_j\in \mathbb Q$,  $b_j\in \mathbb Q\setminus \mathbb Z_{\le 0}$,   $\lambda, \mu\in \Qbar[[x]]$ algebraic over $\Qbar(x)$, and $\lambda(0)=0$?
\end{questno*} 
The parameter $p$ and the functions $\mu, \lambda$ can be different in the various hypergeometric series in the polynomial. It was proved in \cite{firi3} that a positive answer to Question~\ref{questionG}  again contradicts the above mentioned generalization of Grothendieck's Period Conjecture, but under the further assumption that the various functions $\lambda$ all tend to $\infty$ at a common singularity.\par  
 We observe also that  certain $\mathbb C(x)$-linear combinations of hypergeometric series with possibly transcendental parameters~(\footnote{hence not necessarily $G$-functions}) can represent $G$-functions; for instance, from the first identity in \eqref{eq:examplesnonrat}, we have  $\alpha( {}_2F_1[\alpha+1,1;\alpha;x]-{}_1F_0[1;\emptyset;x])=x/(1-x)^2$, where $\alpha$ can be any non-zero complex number.  For $\lambda\in \overline{\mathbb C (x)}$ and non-constant, we consider $_p\mathcal{H}_q[\underline{a};\underline{b};\lambda]$ the pullback of Eq.~\eqref{eq2} by the algebraic function $\lambda$; see for instance \cite[p.~4]{berkenbosch}. It is the monic linear differential equation over $\overline{\C(x)}$ with the $y(\lambda(x))$ as a $\mathbb C$-basis of solutions, where the $y(x)$ form a $\mathbb C$-basis of solutions of Eq.~\eqref{eq2}. When $\lambda\in \C$, we make the convention that $_p\mathcal{H}_q[\underline{a};\underline{b};\lambda]$  corresponds to $y'=0$.
We address the following more general problem.

\begin{questno*}\label{questionG'}
Is it possible to write any $G$-function as an element in the field $\mathcal{F}$ of  rational functions  with coefficients in $\overline{\C(x)}$
of solutions of  differential equations of the form 
$_p\mathcal{H}_{p-1}[\underline{a};\underline{b}; \lambda]$ with $p \ge 1$, $a_j\in \mathbb C$,  $b_j\in \mathbb C\setminus \mathbb Z_{\le 0}$, $\lambda\in \overline{\mathbb C (x)}$?
\end{questno*} 
The value of $p$ and the function $\lambda$ can be different in the various equations $_p\mathcal{H}_{p-1}[\underline{a};\underline{b};\lambda]$.

Since $1-x={}_{1}F_0[-1;\emptyset ;x]$, we have 
$1-{}_{1}F_0[-1;\emptyset ;\lambda (x)]=\lambda (x)$ for every $\lambda\in \overline{\mathbb C (x)}\setminus \C$. Therefore, in Question \ref{questionG'}, we obtain the same field $\mathcal{F}$ if we restrict the coefficients to be in $\mathbb C(x)$ or $\mathbb C$. However, this is no longer true if we restrict the functions $\lambda$ to be in $\mathbb C(x)$, as in the corollaries \ref{coro:N}, \ref{coro:N2}, \ref{coro:new} below obtained by taking coefficients in $\overline{\mathbb C (x)}$ (the statements would be weaker with coefficients in $\mathbb C(x)$ or $\mathbb C$). Note that in contrast with Question \ref{questionG}, we replace in Question \ref{questionG'} polynomials relations by rational relations, so that that many elements of $\mathcal{F}$ are not $G$-functions, not even order 0 arithmetic Nilsson-Gevrey series, defined and studied by Andr\'e \cite{andreannals}. Furthermore even when we replace {\em rational functions} by {\em polynomial}, thereby defining a ring $\widetilde{\mathcal{F}}$ in place of $\mathcal{F}$, certain elements of $\widetilde{\mathcal{F}}$ may not be $G$-functions because $\widetilde{\mathcal{F}}\not\subset \Qbar[[x]] $, and even  $\widetilde{\mathcal{F}}\not\subset \C[[x]] $.
 Question~\ref{questionG'} is of course interesting only for transcendental $G$-functions because $\overline{\mathbb Q (x)}\subset \mathcal{F}$.

\medskip

Our main result is a negative answer to a special case of Question \ref{questionG'} when the $\lambda$'s are non-constant rational functions with  numerators and denominators  of degree less than a fixed integer $N$.  The various differential equations may have non-common singularities so it is {\em a priori} not enough to construct a $G$-function with sufficiently many singularities. To solve this problem, we use an adaptation in our  context of Lemma 4.5 of \cite{frejos}, namely Proposition~\ref{prop3} below. We give a proof using Picard-Vessiot theory, but we emphasize that we follow the main ideas of the proof of \cite[Lemma 4.5]{frejos}, expressed in the Tannakian formalism. This result replaces the inclusion in a field of rational functions in several functions by the inclusion in a field of functions related by one single linear differential equation; see \S \ref{sec:gal} for the precise definition  of the notions of differential Galois theory used in this paper and \S \ref{sec:22} for the proof.

\begin{prop}\label{prop3}
Let $f$ be  a non-zero solution of a linear differential equation with coefficients in $\mathbb C(x)$ whose differential Galois group we denote by $G_f$
and with Picard-Vessiot extension $K_f$ containing $f$. Let $f_1,\dots, f_k$ be non-zero solutions  of linear differential equations with coefficients in $\C(x)$, with Picard-Vessiot extensions $K_{f_1},\dots,K_{f_k}$ containing $f_1, \ldots, f_k$ respectively.  Assume that $f\in \mathbb{C}(x)(f_1,\dots, f_k)\setminus \C(x)$.
If $G_f$ is  non commutative and has no normal algebraic subgroups other than itself and the trivial group, then there exists $i \in \{1,\dots, k\}$
such that $K_f\subset K_{f_i}$.
\end{prop}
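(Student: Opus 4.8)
The plan is to work in the Tannakian / differential-Galois setting and exploit the fact that a composite Picard–Vessiot extension carries all the data of the individual ones. First I would form the compositum $L := K_{f_1}\cdots K_{f_k}$ inside a fixed universal differential extension of $\mathbb{C}(x)$: this is again a Picard–Vessiot extension (of the direct sum of the defining equations), with differential Galois group $H$ a Zariski-closed subgroup of the product $G_{f_1}\times\cdots\times G_{f_k}$, and the projections $H\to G_{f_i}$ are surjective. Since $f\in\mathbb{C}(x)(f_1,\dots,f_k)\subset L$, the extension $\mathbb{C}(x)(f)$ sits inside $L$; enlarging if necessary, the Picard–Vessiot closure $K_f$ of $\mathbb{C}(x)(f)$ is contained in $L$. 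By the Galois correspondence, $K_f$ corresponds to a normal closed subgroup $N\trianglelefteq H$ with $H/N\cong G_f$, i.e.\ we get a surjective morphism of algebraic groups $\pi:H\twoheadrightarrow G_f$.

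Next I would use the hypotheses on $G_f$ to rigidify this morphism. The composite $H\hookrightarrow G_{f_1}\times\cdots\times G_{f_k}\xrightarrow{\text{pr}_i} G_{f_i}$ being surjective for each $i$, consider the $k$ maps $H\to G_{f_i}$ together with $\pi:H\to G_f$. The key algebraic input is: if $G$ is a non-commutative algebraic group whose only normal algebraic subgroups are $\{1\}$ and $G$ (``algebraically simple''), then any surjection from a subgroup of a product onto $G$ must ``come from one factor''. Concretely, I would argue that $\ker\pi$, being normal in $H$ and $H$ sitting in the product, interacts with the kernels $K_i := \ker(\mathrm{pr}_i|_H)$: the $K_i$ are normal in $H$, they intersect trivially (since the $\mathrm{pr}_i$ jointly embed $H$), and $H/K_i\cong G_{f_i}$. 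One shows that for some $i$ we must have $K_i\subseteq \ker\pi$; then $\pi$ factors through $H/K_i\cong G_{f_i}$, giving a surjection $G_{f_i}\twoheadrightarrow G_f$, and correspondingly $K_f\subseteq K_{f_i}$ by the Galois correspondence applied inside $L$.

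The heart of the matter is establishing that claim ``$K_i\subseteq\ker\pi$ for some $i$''. Here I would use non-commutativity of $G_f$ crucially. Suppose, for contradiction, that $K_i\not\subseteq\ker\pi$ for every $i$. Since $\ker\pi$ is normal and $G_f = H/\ker\pi$ is algebraically simple and non-abelian, the image of each $K_i$ in $G_f$ is a normal algebraic subgroup, hence (being nontrivial) equal to all of $G_f$. Now $K_1$ and $K_2$ commute elementwise inside the product $G_{f_1}\times\cdots\times G_{f_k}$ (they live in complementary blocks of factors — more precisely $K_i$ is contained in the $i$-th factor's preimage, and distinct $\mathrm{pr}_j$-kernels pairwise commute); so their images in $G_f$ are two subgroups that (i) are each all of $G_f$ and (ii) commute elementwise — forcing $G_f$ abelian, a contradiction. (If $k=1$ there is nothing to prove since $\mathbb{C}(x)(f)\subset K_{f_1}$ already gives $K_f\subseteq K_{f_1}$.) Thus some $K_i\subseteq\ker\pi$, completing the argument.

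The main obstacle I anticipate is not the group theory but the bookkeeping that makes ``$K_i$ and $K_j$ commute elementwise in the product'' rigorous: one must check that inside $H$ (not merely inside the ambient product) the relevant kernels really are mutually centralizing, and that the passage between the subgroup picture (closed subgroups of $H$) and the field picture (intermediate Picard–Vessiot extensions $\mathbb{C}(x)\subset K\subset L$) is the standard anti-equivalence with no connectedness hypotheses needed. A secondary technical point is justifying that $K_f$, a priori only specified as ``a Picard–Vessiot extension containing $f$'', can be taken inside $L$ — this uses uniqueness of Picard–Vessiot extensions up to isomorphism and the fact that $\mathbb{C}(x)(f)\subset L$ generates a Picard–Vessiot subextension of $L$ that serves as $K_f$; I would invoke the definitions recalled in \S\ref{sec:gal} for this.
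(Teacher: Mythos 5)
Your overall strategy is the same as the paper's (a Goursat-type argument on a subgroup of a product of differential Galois groups, exploiting that $G_f$ is non-commutative with no proper non-trivial normal algebraic subgroups), but the step you yourself single out as the heart of the matter contains a genuine error, not mere bookkeeping. You claim that the kernels $K_i=\ker(\mathrm{pr}_i|_H)$ pairwise commute elementwise because they ``live in complementary blocks of factors''. This is false as soon as $k\ge 3$: $K_1=H\cap(\{1\}\times G_{f_2}\times\cdots\times G_{f_k})$ and $K_2=H\cap(G_{f_1}\times\{1\}\times G_{f_3}\times\cdots\times G_{f_k})$ can both have non-trivial components in the third factor, and those components need not commute (take $H=G\times G\times G$ with $G$ non-abelian: $(1,1,a)\in K_1$ and $(1,1,b)\in K_2$ do not commute when $ab\ne ba$). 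What is true is only $[K_1,K_2]\subseteq K_1\cap K_2$, which is not trivial in general, so ``two normal subgroups with full image in $G_f$ that centralize each other'' is not what you actually have, and the contradiction with non-commutativity does not follow as written. The standard repair --- and exactly what the paper does --- is to split the factors into \emph{two} blocks, $\{f_1\}$ versus the compositum of the remaining $K_{f_i}$ (after a minimality reduction on the set of extensions involved), so that the two relevant kernels genuinely sit in $\{1\}\times(\cdots)$ and $(\cdots)\times\{1\}$ and do commute elementwise; one then concludes either by this $k=2$ case plus minimality, or by induction on $k$. (Alternatively, an iterated-commutator argument works: setting $P_j=K_1\cap\cdots\cap K_j$, one checks $[P_j,K_{j+1}]\subseteq P_{j+1}$, and since $[G_f,G_f]$ is a non-trivial closed normal subgroup, hence all of $G_f$, surjectivity of $\pi$ on each $K_i$ propagates to $\pi(P_k)=G_f$ while $P_k=\{1\}$. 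But that is a different argument from the one you wrote.)

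A secondary soft spot: the inclusion $K_f\subseteq L$ does not follow from uniqueness of Picard--Vessiot extensions alone, because the equation defining $K_f$ could a priori have solutions not generated by $f$ and its conjugates, so the Picard--Vessiot subextension of $L$ generated by $f$ need not ``serve as $K_f$''. The inclusion does hold here, but only because of the simplicity hypothesis: realizing $K_f$ and $L$ inside a common Picard--Vessiot extension, $K_f\cap L$ is a $G_f$-stable subfield of $K_f$ containing $f\notin\C(x)$, hence equals $K_f$ by Remark \ref{rem1}. The paper sidesteps this issue entirely by adjoining the equation of $f$ as an extra block of the big diagonal system from the outset, so that $K_f$ lies in the ambient Picard--Vessiot extension by construction.
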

Roughly speaking, $K_f \subset K_{f_i}$ means that $f\in \mathbb{C}(x)(f_{i,1},\dots ,f_{i,p_{i}})$, where $f_{i,1},\dots ,f_{i,p_{i}}$ form a maximal set of $\mathbb C$-linearly independent solutions of the  linear differential equation  satisfied by $f_i$.\par 

The elements of the Picard-Vessiot extension have the following property: there exists a finite set $S\subset \mathbb{P}^1 (\C)$ such that for all $x_0 \in \mathbb{P}^1 (\C)\setminus S$, there exists an open subset of $\mathbb{P}^1 (\C)$ containing $x_0$ and such that the elements can be analytically continued to that domain. The set $S$ is a subset of the set of singularities of the linear differential equation. By abuse of terminology, we shall say that elements of the Picard-Vessiot extension have a finite set of singularities. Since the elements of $\mathcal{F}$ are  rational functions  with coefficients in $\overline{\C(x)}$
of solutions of  differential equations of the form 
$_p\mathcal{H}_{p-1}[\underline{a};\underline{b}; \lambda]$, so that they are  rational functions  with coefficients in $\C(x)$
of solutions of  differential equations of the form 
$_p\mathcal{H}_{p-1}[\underline{a};\underline{b}; \lambda]$ and of algebraic functions.
On the other hand, the elements of the Picard-Vessiot extension of an algebraic function are algebraic. Therefore,  replacing the solutions of  $_p\mathcal{H}_{p-1}[\underline{a};\underline{b}; \lambda]$ by solutions of linear differential equation with at most $N\in \N$  singularities, to deduce Theorem~\ref{theo:N} from Proposition~\ref{prop3},  it is enough to display a transcendental $G$-function with a suitable differential Galois group and with  sufficiently many  non-polar singularities. 

\begin{theo}\label{theo:N} Let $N\in \N$. There exists a $G$-function $\xi_N$ which is not an element of the field $ \mathcal{K}_{N}$ of  rational functions with coefficients in $\overline{\C(x)}$ of solutions of linear differential equations with coefficients in $\mathbb C(x)$ and at most $N$ singularities in $\mathbb{P}^1 (\mathbb C)$.
\end{theo}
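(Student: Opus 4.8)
The plan is to combine Proposition~\ref{prop3} with an explicit construction of a $G$-function whose differential Galois group is simple noncommutative and whose attached differential equation has arbitrarily many singularities. First I would recall the reduction explained in the paragraph preceding the theorem: every element of $\mathcal{K}_N$ is a rational function with coefficients in $\mathbb{C}(x)$ of solutions $f_1,\dots,f_k$ of linear differential equations over $\mathbb{C}(x)$ each having at most $N$ singularities in $\mathbb{P}^1(\mathbb{C})$. So it suffices to exhibit, for each $N$, a transcendental $G$-function $\xi_N$ satisfying a linear differential equation $\mathcal{L}_N$ over $\mathbb{C}(x)$ whose differential Galois group $G_{\xi_N}$ is noncommutative with no proper nontrivial normal algebraic subgroups, and such that $\mathcal{L}_N$ (equivalently, its Picard--Vessiot extension) genuinely has more than $N$ singularities. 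Indeed, if $\xi_N\in\mathcal{K}_N$ then, by Proposition~\ref{prop3} applied with $f=\xi_N$, the Picard--Vessiot extension $K_{\xi_N}$ embeds in some $K_{f_i}$; but then the singularities of $K_{\xi_N}$ are among those of $K_{f_i}$, hence at most $N$ of them, contradicting the construction. One has to be mildly careful that the hypothesis ``$f\in\mathbb{C}(x)(f_1,\dots,f_k)\setminus\mathbb{C}(x)$'' of Proposition~\ref{prop3} holds: $\xi_N$ is transcendental over $\mathbb{C}(x)$ (being a transcendental $G$-function not satisfying any order-$1$ equation), so it cannot lie in $\mathbb{C}(x)$, and the membership in $\mathbb{C}(x)(f_1,\dots,f_k)$ is exactly what ``$\xi_N\in\mathcal{K}_N$'' provides after the reduction above.

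The heart of the matter is therefore the construction of $\xi_N$. The natural candidates are $G$-functions attached to hypergeometric-type equations with differential Galois group $\mathrm{SL}_2$ (or $\mathrm{PSL}_2$, or a classical group), since these are noncommutative and almost simple; concretely I would take a suitable pullback $_2\mathcal{H}_1[\underline{a};\underline{b};\lambda_N]$ where $\lambda_N$ is a rational function of large degree, or more simply a hypergeometric equation $_p\mathcal{H}_{p-1}$ in a variable substituted by $R_N(x)$ with $\deg R_N$ large, chosen so that the monodromy/differential Galois group is unchanged by the substitution while the number of singularities grows like $\deg R_N$. The singularities of a pullback by $\lambda_N$ sit above the singularities $\{0,1,\infty\}$ of the hypergeometric equation together with the ramification points of $\lambda_N$; by picking $\lambda_N$ with many distinct preimages of $\{0,1,\infty\}$ and/or many distinct critical points, one forces the number of true singularities to exceed $N$, while a local analysis of the exponents (ensuring the substitution points are not apparent singularities) guarantees these are honest singularities of $\mathcal{L}_N$, not removable ones. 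One must also check that the resulting function is still a $G$-function: this follows from the closure properties recalled in the introduction (algebraic pullback with $\lambda(0)=0$ of a hypergeometric $G$-function, times an algebraic prefactor if needed, is a $G$-function), provided the hypergeometric parameters are rational, which is compatible with imposing an $\mathrm{SL}_2$-type Galois group by Schwarz's list / Beukers--Heckman criteria.

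The main obstacle, and the step I would spend the most care on, is the simultaneous control of two a priori competing requirements: (1) the differential Galois group $G_{\xi_N}$ must remain noncommutative with no proper nontrivial normal algebraic subgroup — this is a strong constraint that rules out, e.g., groups with a central torus or finite center quotients behaving badly, and it forces a careful choice of parameters (and possibly working with $\mathrm{PSL}_2$ or a symmetric/antisymmetric power construction to kill the center), using the classification of algebraic subgroups of $\mathrm{SL}_2$ and the Beukers--Heckman description of hypergeometric Galois groups; and (2) the number of singularities of the underlying equation must be genuinely unbounded as a function of the free parameter (the degree of $\lambda_N$), with each candidate singularity shown to be non-apparent via its local exponent differences. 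A subtle point is that pulling back can sometimes create apparent singularities or, conversely, that the Galois group of the pullback could a priori be smaller (e.g. if $\lambda_N$ factors through a map that trivializes part of the monodromy); I would handle this by choosing $\lambda_N$ so that the induced map on fundamental groups is surjective onto the monodromy group of the base hypergeometric equation (for instance taking $\lambda_N$ to be a generic rational function of degree $N+c$ with all ramification over $\{0,1,\infty\}$ simple and disjoint), which keeps the differential Galois group equal to that of the base equation while linearly increasing the singularity count. Once both points are secured, the contradiction with Proposition~\ref{prop3} is immediate and the theorem follows.
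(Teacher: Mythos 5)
Your proposal follows essentially the same route as the paper: apply Proposition \ref{prop3} to a transcendental $G$-function obtained by pulling back, through a rational function of large degree, a fixed $G$-function whose differential Galois group is $\mathrm{PSL}_{2}(\C)$, and derive a contradiction by counting singularities (the paper takes the Ap\'ery-related function $\xi$ of Proposition \ref{prop:2}, uses Lemma \ref{prop4} to see that a rational pullback does not shrink the Galois group, and chooses $\varphi_N$ so that $\xi\circ\varphi_N$ has at least $N+1$ non-polar singularities). There is, however, one genuine flaw in your reduction: it is not true that every element of $\mathcal{K}_N$ is a rational function with coefficients in $\C(x)$ of solutions of equations having at most $N$ singularities. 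The coefficients lying in $\overline{\C(x)}$ must themselves be adjoined as additional functions $f_i$, and these algebraic functions satisfy equations over $\C(x)$ with arbitrarily many singularities (already $\overline{\C(x)}\subset\mathcal{K}_N$ shows your claim fails for small $N$). Hence your concluding step ``the singularities of $K_{\xi_N}$ are among those of $K_{f_i}$, hence at most $N$'' breaks down precisely when Proposition \ref{prop3} selects such an algebraic $f_i$. The paper closes this case separately: the Picard-Vessiot extension of an algebraic function contains only algebraic functions, so $K_{\xi_N}\subset K_{f_i}$ would force $\xi_N$ to be algebraic, contradicting its transcendence. Since you do establish transcendence of $\xi_N$, the repair is available to you, but it must be made explicit. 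The remainder of your sketch (rational hypergeometric parameters with $\mathrm{SL}_2$-type group via Schwarz or Beukers--Heckman, a symmetric-square construction to kill the center, and local-exponent checks that enough pullback singularities are non-apparent) is sound and parallels what the paper carries out concretely with Kovacic's algorithm and the explicit non-polar singularities $(\sqrt{2}\pm 1)^4$ of $\xi$.
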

 As a corollary to Theorem~\ref{theo:N}, we obtain the following negative answer to a special case of Question~\ref{questionG'} where the $\lambda$'s are  rational functions with  numerator and denominator of degree less than any fixed integer~$M$. In this case,  the equations  $_p\mathcal{H}_{p-1}[\underline{a};\underline{b};\lambda]$ are of order at most $p$ with coefficients in $\mathbb C(x)$.

 \begin{coro}\label{coro:N}  Let $M\in \N^*$.  There exists a $G$-function
 which is not an element of the field   $\mathcal{F}_{M}$ of rational functions with coefficients in $\overline{\C (x)}$  
of solutions of  differential equations of the form 
 $_p\mathcal{H}_{p-1}[\underline{a};\underline{b};\lambda]$, with $p \ge 1$, $a_j\in \mathbb C$,  $b_j\in \mathbb C\setminus \mathbb Z_{\le 0}$, and $\lambda\in \mathbb C (x)$ with  numerators and denominators of degree less than $M$.
\end{coro}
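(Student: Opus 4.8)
The plan is to deduce the corollary from Theorem~\ref{theo:N} by showing that, for a suitable integer $N$ depending only on $M$, the field $\mathcal{F}_M$ is contained in the field $\mathcal{K}_N$ of Theorem~\ref{theo:N}. Once this inclusion is established, the $G$-function $\xi_N$ furnished by Theorem~\ref{theo:N} does not lie in $\mathcal{K}_N$, hence not in $\mathcal{F}_M$, which is exactly the claim.

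To prove $\mathcal{F}_M\subseteq\mathcal{K}_N$ I would first unwind the definition: any element of $\mathcal{F}_M$ is a rational function, with coefficients in $\overline{\C(x)}$, of finitely many solutions $g_1,\dots,g_s$ of equations of the form ${}_{p_i}\mathcal{H}_{p_i-1}[\underline{a}^{(i)};\underline{b}^{(i)};\lambda_i]$ with $\lambda_i\in\mathbb C(x)$ whose numerator and denominator have degree $<M$. As recalled in the excerpt, each such equation has coefficients in $\mathbb{C}(x)$. Since the definition of $\mathcal{K}_N$ places no restriction on the coefficients in $\overline{\C(x)}$ (i.e. on the algebraic functions involved), it therefore suffices to bound, uniformly in terms of $M$ alone, the number of singularities in $\mathbb{P}^1(\mathbb C)$ of ${}_p\mathcal{H}_{p-1}[\underline{a};\underline{b};\lambda]$: if this number is at most $N$, then each $g_i$ is a solution of a linear differential equation over $\mathbb C(x)$ with at most $N$ singularities, and every element of $\mathcal{F}_M$ lies in $\mathcal{K}_N$.

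The heart of the argument is this singularity count. The hypergeometric equation~\eqref{eq2} with $q=p-1$ is Fuchsian with singular locus exactly $\{0,1,\infty\}$. Writing $\lambda=P/Q$ in lowest terms with $d:=\max(\deg P,\deg Q)<M$, the associated map $\lambda\colon\mathbb{P}^1(\mathbb C)\to\mathbb{P}^1(\mathbb C)$ has degree $d$, and the pullback ${}_p\mathcal{H}_{p-1}[\underline{a};\underline{b};\lambda]$ can only be singular at a point of $\lambda^{-1}(\{0,1,\infty\})$ or at a ramification point of $\lambda$, the latter arising through the factor $1/\lambda'$ in the chain rule (and possibly being an apparent singularity). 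Now $|\lambda^{-1}(\{0,1,\infty\})|\le 3d$, and by the Riemann--Hurwitz formula $\lambda$ has at most $2d-2$ ramification points; hence ${}_p\mathcal{H}_{p-1}[\underline{a};\underline{b};\lambda]$ has at most $5d-2<5M$ singularities (and none if $\lambda$ is constant, the convention $y'=0$ applying then). Taking $N:=5M$ yields $\mathcal{F}_M\subseteq\mathcal{K}_N$, and Theorem~\ref{theo:N} concludes.

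The step that requires care is the precise identification of the singular locus of the pullback equation: one must verify that the chain-rule computation creates no singularity outside $\lambda^{-1}(\{0,1,\infty\})$ beyond the zeros and poles of $\lambda'$, i.e. the ramification points of $\lambda$. This is classical (see e.g.\ the references on pullbacks of Fuchsian equations cited in the excerpt), and after it the counting is a routine application of Riemann--Hurwitz; I note that the bound $N=5M$ is far from optimal and could easily be sharpened, only its finiteness being needed.
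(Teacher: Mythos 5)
Your proof is correct, and it takes a slightly but genuinely different route from the paper's. You reduce the corollary to Theorem~\ref{theo:N} used as a black box, by proving the inclusion $\mathcal{F}_M\subseteq\mathcal{K}_{5M}$; this forces you to bound the number of singularities \emph{of the pullback equation itself}, including the apparent singularities at the ramification points of $\lambda$ (these do occur: the Wronskian of the pulled-back basis is $W(y_1,\dots,y_p)(\lambda(x))\cdot(\lambda'(x))^{p(p-1)/2}$, whose extra zeros are exactly the critical points of $\lambda$), whence the Riemann--Hurwitz term and the constant $5M$. The paper instead reruns the argument of Theorem~\ref{theo:N}: assuming $\xi_{\widetilde M}\in\mathcal{F}_M$, Proposition~\ref{prop3} places $K_{\xi_{\widetilde M}}$ inside the Picard--Vessiot extension of a single pullback equation (or of an algebraic function), and one only needs to count the \emph{non-polar singularities of the elements of that extension}; since $y(\lambda(x))$ is analytic wherever $\lambda(x)\notin\{0,1,\infty\}$ --- in particular at a mere ramification point --- this count is at most $3M$, and taking $\widetilde M=3M+1$ concludes. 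So the paper's version avoids the apparent-singularity/Riemann--Hurwitz count and yields a sharper constant, while yours is more modular in that it isolates a clean inclusion of fields and does not reinvoke the Galois-theoretic machinery; both hinge on the same elementary observation that the singularities of the pullback are controlled by $\lambda^{-1}(\{0,1,\infty\})$ (plus, in your accounting, the critical points), and both are complete.
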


 Specializing this corollary gives a negative answer to a special case of Question~\ref{questionG}. 

\begin{coro}\label{coro:N2}  Let $M\in \N^*$.  There exists a $G$-function which cannot be written as a polynomial with coefficients in $\Qbar$ of $G$-functions  ${\mu(x)\cdot{}_{p}F_{p-1}[a_1, \ldots, a_{p}; b_1,\ldots, b_{p-1} ;\lambda(x)]}$, with $p \ge 1$, $a_j\in \mathbb Q$,  $b_j\in \mathbb Q\setminus \mathbb Z_{\le 0}$,   $\lambda, \mu\in \Qbar[[x]]$, $\mu$ algebraic over $\Qbar(x)$, and $\lambda \in \overline{\Q}(x)\setminus  \overline{\Q}$ has  numerators and denominators of degree less than~$M$ 
and $\lambda(0)=0$.
\end{coro}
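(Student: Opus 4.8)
The plan is to obtain Corollary~\ref{coro:N2} directly from Corollary~\ref{coro:N}, by verifying that every $G$-function representable in the (more restrictive) sense of Corollary~\ref{coro:N2} already lies in the field $\mathcal{F}_M$ of Corollary~\ref{coro:N}. So I would fix $M\in\mathbb{N}^*$, let $\xi$ be the $G$-function furnished by Corollary~\ref{coro:N} for this $M$ (hence $\xi\notin\mathcal{F}_M$), and show that no polynomial with coefficients in $\overline{\mathbb{Q}}$ of functions $\mu(x)\cdot{}_pF_{p-1}[a_1,\dots,a_p;b_1,\dots,b_{p-1};\lambda(x)]$ — with $p\ge 1$, $a_j\in\mathbb{Q}$, $b_j\in\mathbb{Q}\setminus\mathbb{Z}_{\le 0}$, $\mu$ algebraic over $\overline{\mathbb{Q}}(x)$, and $\lambda\in\overline{\mathbb{Q}}(x)\setminus\overline{\mathbb{Q}}$ with numerators and denominators of degree less than $M$ and $\lambda(0)=0$ — can equal $\xi$. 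For this it is enough to check that every such polynomial lies in $\mathcal{F}_M$, since $\xi$ does not.

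The main step is to place a single building block $g=\mu(x)\cdot{}_pF_{p-1}[\underline a;\underline b;\lambda(x)]$ inside $\mathcal{F}_M$. First, ${}_pF_{p-1}[\underline a;\underline b;x]$ is a solution of the hypergeometric equation~\eqref{eq2} with $q=p-1$, so by the definition of the pullback recalled before Question~\ref{questionG'}, the function ${}_pF_{p-1}[\underline a;\underline b;\lambda(x)]$ is a solution of $_p\mathcal{H}_{p-1}[\underline a;\underline b;\lambda]$. Next, this equation is of exactly the type admitted in Corollary~\ref{coro:N}: one has $a_j\in\mathbb{Q}\subset\mathbb{C}$, $b_j\in\mathbb{Q}\setminus\mathbb{Z}_{\le 0}\subset\mathbb{C}\setminus\mathbb{Z}_{\le 0}$, and $\lambda\in\overline{\mathbb{Q}}(x)\subset\mathbb{C}(x)$ has numerators and denominators of degree less than $M$, so the degree bound carries over verbatim and $_p\mathcal{H}_{p-1}[\underline a;\underline b;\lambda]$ is a linear differential equation with coefficients in $\mathbb{C}(x)$ of the required form. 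Finally, $\mu$ is algebraic over $\overline{\mathbb{Q}}(x)\subset\mathbb{C}(x)$, hence $\mu\in\overline{\mathbb{C}(x)}$. Thus $g=\mu(x)\cdot{}_pF_{p-1}[\underline a;\underline b;\lambda(x)]$ is the product of an element of $\overline{\mathbb{C}(x)}$ with a solution of an admissible equation, so $g\in\mathcal{F}_M$.

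To conclude, $\overline{\mathbb{Q}}\subset\overline{\mathbb{C}(x)}$ and $\mathcal{F}_M$ is a field; hence any polynomial with coefficients in $\overline{\mathbb{Q}}$ of finitely many such building blocks again belongs to $\mathcal{F}_M$. Since $\xi\notin\mathcal{F}_M$, it admits no such representation, which is precisely the assertion of Corollary~\ref{coro:N2}.

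I do not anticipate a genuine obstacle here: the argument is bookkeeping of inclusions of parameter sets ($\mathbb{Q}\subset\mathbb{C}$), of coefficient fields ($\overline{\mathbb{Q}}\subset\overline{\mathbb{C}(x)}$ and $\overline{\mathbb{Q}}(x)\subset\mathbb{C}(x)$) and of function classes (polynomials among rational functions, and $\mu\cdot{}_pF_{p-1}[\underline a;\underline b;\lambda]$ among the rational functions of solutions of $_p\mathcal{H}_{p-1}[\underline a;\underline b;\lambda]$), all oriented so as to shrink the class of representable $G$-functions. The only point worth noting is that the degree bound $M$ on $\lambda$ in Corollary~\ref{coro:N2} coincides with the bound defining $\mathcal{F}_M$, so no inflation of the constant is needed; the extra requirement $\lambda(0)=0$, which guarantees that the building blocks are genuine $G$-functions as in Question~\ref{questionG}, only further restricts the class and is harmless.
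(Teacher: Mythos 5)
Your proof is correct and takes the same route as the paper: the paper offers no separate argument for Corollary~\ref{coro:N2}, presenting it as an immediate specialization of Corollary~\ref{coro:N}, and your write-up supplies exactly the routine inclusion checks (parameters $\mathbb Q\subset\mathbb C$, coefficients $\Qbar\subset\overline{\C(x)}$, $\mu\in\overline{\C(x)}$, and the matching degree bound on $\lambda$) showing that every admissible polynomial lies in $\mathcal{F}_{M}$, so the $G$-function of Corollary~\ref{coro:N} works.
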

Again, it is understood that $p, \mu$ and $\lambda$ can vary in the polynomial.

\medskip

The $G$-functions $\xi_N$ in Theorem~\ref{theo:N} are all constructed using the $G$-function $\xi(x):=x(x^2-34x+1)^{1/2}\alpha(x)$, where $\alpha$ is the generating $G$-function of Ap\'ery's numbers:
$$
\alpha (x):=\sum_{n=0}^\infty \left(\sum_{k=0}^n \binom{n}{k}^2\binom{n+k}{n}^2\right)x^n
\in \mathbb Z[[x]],
$$
which is a solution of 
$$
 x^2(1-34x+x^2)y'''+x(3-153x+6x^2)y''
+
(1-112x+7x^2)y'+(x-5)y=0. 
$$
For our purpose, the crucial properties of $\xi$  is that 
it is transcendental
 and the differential Galois group of one linear differential equation of order 3 it satisfies is $\mathrm{PSL}_2 (\C)$. 

\medskip

 As another direct application of  Proposition~\ref{prop3}, we shall also prove that the function $\xi$ provides a negative answer to the question for $G$-functions which is the closest to Siegel's original question for $E$-functions. 
\begin{coro} \label{coro:new}
 The $G$-function $\xi$ cannot be written as a polynomial with coefficients in $\Qbar$ of functions of the form ${{}_{p}F_{p-1}[a_1, \ldots, a_{p}; b_1,\ldots, b_{p-1} ;\gamma x^m]}$, with $p \ge 1$, $a_j\in \mathbb Q$,  $b_j\in \mathbb Q\setminus \mathbb Z_{\le 0}$, $\gamma \in \overline{\Q}^*$, $m\in \Z^*$. 
\end{coro}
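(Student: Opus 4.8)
The plan is to deduce Corollary~\ref{coro:new} from Proposition~\ref{prop3} applied to the function $f=\xi$, exactly as Corollary~\ref{coro:N} is deduced from Theorem~\ref{theo:N}, but this time exploiting the \emph{particular} shape of the hypergeometric series ${}_pF_{p-1}[\underline a;\underline b;\gamma x^m]$ rather than a singularity count. First I would recall the two crucial properties of $\xi(x)=x(x^2-34x+1)^{1/2}\alpha(x)$ stated in the excerpt: $\xi$ is transcendental over $\mathbb C(x)$, and there is an order-$3$ linear differential equation over $\mathbb C(x)$ satisfied by $\xi$ whose differential Galois group is $\mathrm{PSL}_2(\mathbb C)$. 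The group $\mathrm{PSL}_2(\mathbb C)$ is non-commutative and simple as an algebraic group (its only normal algebraic subgroups are itself and $\{1\}$), so the hypotheses of Proposition~\ref{prop3} on $G_f=G_\xi$ are met. Note that a polynomial expression with coefficients in $\overline{\mathbb Q}$ in the given hypergeometric functions $g_1,\dots,g_k$ is in particular a rational function with coefficients in $\mathbb C(x)$ of $g_1,\dots,g_k$, so if $\xi$ admitted such a representation then $\xi\in\mathbb C(x)(g_1,\dots,g_k)\setminus\mathbb C(x)$, and Proposition~\ref{prop3} would furnish an index $i$ with $K_\xi\subset K_{g_i}$.

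Next I would analyse what $K_\xi\subset K_{g_i}$ forces. Write $g_i(x)={}_pF_{p-1}[\underline a;\underline b;\gamma x^m]$ with rational parameters, $\gamma\in\overline{\mathbb Q}^*$ and $m\in\mathbb Z^*$. Since a Picard--Vessiot extension is generated over $\mathbb C(x)$ by \emph{all} solutions of the equation, $K_{g_i}$ is the Picard--Vessiot extension of the hypergeometric equation ${}_p\mathcal H_{p-1}[\underline a;\underline b;\gamma x^m]$, i.e.\ the pullback of Eq.~\eqref{eq2} along the map $x\mapsto\gamma x^m$ (for $m<0$ one uses $x\mapsto\gamma x^m$ as an algebraic substitution, or equivalently pulls back along $x\mapsto x^{-1}$ first; the discussion below is unaffected). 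The key structural fact is that the singularities of a hypergeometric equation ${}_pF_{p-1}[\underline a;\underline b;z]$ lie in $\{0,1,\infty\}$, hence after the substitution $z=\gamma x^m$ the singularities of $K_{g_i}$'s defining equation lie in $\{0,\infty\}\cup\{x:\gamma x^m=1\}$, which is a union of $\{0,\infty\}$ with the $|m|$-th roots of $1/\gamma$ — in particular a set of points that are \emph{rotationally symmetric about $0$} and lie on a single circle together with $0$ and $\infty$. By contrast, the differential equation of order $3$ satisfied by $\xi$ that has Galois group $\mathrm{PSL}_2(\mathbb C)$ — namely a symmetric-square-type equation built from the Apéry operator — has its singularities at $x=0$, $x=\infty$ and the two roots of $x^2-34x+1=0$, i.e.\ at $17\pm12\sqrt2$, which are two distinct positive reals of different modulus. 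I would show that $K_\xi\subset K_{g_i}$ is impossible by a local monodromy / singularity argument: an inclusion of Picard--Vessiot extensions forces the singular locus of the subextension to be contained in that of the overextension (more precisely, every finite singularity of the minimal operator of $\xi$ must be a singularity of the pulled-back hypergeometric operator), and $\{17-12\sqrt2,\,17+12\sqrt2\}$ cannot be contained in a set of the form $\{|m|\text{-th roots of }1/\gamma\}$ because the latter has all its elements of equal modulus while $17-12\sqrt2\ne 17+12\sqrt2$ in modulus. Alternatively one can argue via the exponents/local exponent differences at the singular points, which for the hypergeometric equation are explicitly constrained by $\underline a,\underline b$, but the modulus argument is cleanest.

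A subtlety I would address carefully: a polynomial in the $g_i$ is a rational function of the \emph{finite list} $g_1,\dots,g_k$, but Proposition~\ref{prop3} as stated takes each $f_j$ to be a single non-zero solution of its own equation, so I would simply set $f_j:=g_j$ and let $K_{f_j}$ be the Picard--Vessiot extension of the hypergeometric equation it satisfies; this is legitimate since ${}_pF_{p-1}[\underline a;\underline b;\gamma x^m]$ is a non-zero holomorphic solution at $0$ of that equation. One also has to handle the trivial/degenerate cases where some $g_i$ is algebraic (e.g.\ terminating series, or the Euler-type specializations in \eqref{eq:examplesnonrat}): then $K_{g_i}$ consists of algebraic functions, so $K_\xi\subset K_{g_i}$ would make $\xi$ algebraic, contradicting its transcendence; thus such $i$ is excluded and one may restrict to the genuinely transcendental hypergeometric factors.

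The main obstacle, and the step I would expect to require the most care, is the singularity-inclusion argument $K_\xi\subset K_{g_i}\Rightarrow \mathrm{Sing}(\xi)\subset\mathrm{Sing}({}_p\mathcal H_{p-1}[\underline a;\underline b;\gamma x^m])$: one must ensure that the finite singular points of the \emph{minimal} monic operator annihilating $\xi$ (not just of some operator it satisfies) genuinely appear among the singularities of the big equation, since Picard--Vessiot extensions a priori only bound singularities \emph{from above} and one could worry about apparent singularities being introduced or removed. I would resolve this by recalling that $17\pm 12\sqrt 2$ are true (non-apparent) singularities of the order-$3$ Apéry-type equation — the local monodromy there is non-trivial, as can be read off from the known local exponents of the Apéry differential equation — so these points must be singularities of any operator whose solution space contains $\xi$ and all its Galois conjugates, and in particular of the pulled-back hypergeometric operator generating $K_{g_i}$. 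Once that is established, comparing $\{17-12\sqrt2,17+12\sqrt2\}$ with the equimodular set of roots of $\gamma x^m=1$ gives the contradiction, completing the proof.
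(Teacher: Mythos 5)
Your proposal is correct and follows essentially the same route as the paper: apply Proposition~\ref{prop3} (with $G_\xi\cong\mathrm{PSL}_2(\C)$) to place $\xi$ in the Picard--Vessiot extension of a single pulled-back hypergeometric equation $_p\mathcal{H}_{p-1}[\underline{a};\underline{b};\gamma x^m]$, whose singularities in $\C^*$ all satisfy $\gamma x^m=1$ and hence have equal modulus, then contradict this with the two non-polar singularities $(\sqrt2\pm1)^4=17\pm12\sqrt2$ of $\xi$ of distinct moduli. The ``subtlety'' you flag about these being genuine (non-apparent) singularities is exactly what Proposition~\ref{prop:2}$(ii)$ establishes via the explicit local bases and connection coefficients.
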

It is understood that the parameters of the hypergeometric functions can vary in the polynomial, as well as the values of $\gamma$, $p$ and $m$.

\medskip

We conclude this introduction by mentioning that the question of the relations between $G$-functions of differential order 2 and hypergeometric series $_2F_1$ was the subject of a conjecture of Dwork \cite{dwork2}, disproved by Krammer \cite{krammer}. See the introduction of \cite{firi2} for more details, as well as some results on the nature of $G$-functions of differential order $\le 2$. On a similar note, it is shown in \cite{hsz} that a certain (explicit) period of a family hyperelliptic curves of genus 2 (related the $G$-function generating the squares of Legendre polynomials) satisfies a linear differential equation of order 2 with monodromy group dense in $\mathrm{SL}_2(\mathbb R)$, amongst other examples, from which the authors conclude that ``this suggests that they (these equations) cannot be solved in terms of $_2F_1$ hypergeometric functions, which is novel for an arithmetic second order equation that is defined over $\mathbb Q$''. These results have no obvious intersection with our results, which are of a different nature. Let us also 
mention the assertion that belongs to folklore that $G$-functions could all be obtained as suitable ``specializations'' of the parameters and variables of multivariate $A$-hypergeometric functions (defined in \cite{gkz}). This is already
known to be true for algebraic functions over $\mathbb Q(x)$, see \cite{sturmfels} for instance.

\medskip

The paper is organized as follows. Section \ref{sec1} is devoted to a quick reminder of differential Galois theory, and we prove there Proposition \ref{prop3}. In Section \ref{sec:specialGfn}, we study the $G$-function~$\xi$ and compute its differential Galois group. Finally in Section~\ref{secfin}, we give the proofs of Theorem \ref{theo:N} and 
Corolla\-ries~\ref{coro:N} and~\ref{coro:new}.

\bigskip

\noindent {\bf Acknowledgement}. We thank M. Mezzarobba for his explanations concerning his Maple code {\tt NumGfun} and A. Bostan for his comments on Schwarz' classification. Both authors have partially been funded by the ANR project De Rerum Natura (ANR-19-CE40-0018). The IMB receives support from the EIPHI Graduate School (contract ANR-17-EURE-0002).  We thank the referee for a careful reading that helped us to improve the first version of this paper.

\section{A result in differential Galois theory}\label{sec1}

This section is devoted to differential Galois theory. It contains a short survey and  the proof of Proposition \ref{prop3}. We also state and prove Lemma \ref{prop4}, which is necessary for the proof of Theorem \ref{theo:N}.

\subsection{A quick survey of differential Galois theory}\label{sec:gal} The goal of this section is to make a quick remainder of differential Galois theory.  For convenience, we shall now focus on the case where the equation has coefficients in $\mathbb{C}(x)$.  For a general theory, we refer to \cite{putsinger}.   
Let $B$ be a $p\times p$ square matrix with entries in $\mathbb{C}(x)$ and 
let us consider the linear differential system 
$Y' =BY.$ 
In what follows, for $K$ a differential field extension of $\mathbb{C}(x)$, we set ${K^{\partial}:=\{ a\in K|a'=0\}}$ its field of constants. Note that $\mathbb{C}(x)^{\partial}=\mathbb{C}$ has characteristic zero and is algebraically closed. 
A Picard-Vessiot extension for $Y' =BY$ over $\mathbb{C}(x)$ is a differential field extension $K/\mathbb{C}(x)$ such that 
\begin{enumerate}
\item[--] There exists $U\in  \mathrm{GL}_p (K)$ such that $U'=BU$; such a matrix $U$ is called a fundamental solution;
\item[--] $K$ is generated over $\mathbb{C}(x)$ by the entries of $U$;
\item[--] $K^{\partial}=\mathbb{C}$.
\end{enumerate}
By the Cauchy-Lipschitz theorem, for all $a\in \C$ that is not a pole of any entry of $B$,  there exists a fundamental solution $U_a$ with entries analytic at $a$. Then, 
$\C(x)(U_a)$ has field of constants $\C$ and  is therefore a Picard-Vessiot extension for $Y'=BY$ over $\C(x)$, proving the existence of the latter.  Then  we can realize the Picard-Vessiot extension as a field of meromorphic functions at a  point that is not a singularity of the equation. A Picard-Vessiot extension is unique up to isomorphisms of  $\mathbb{C}(x)$-differential algebras.  Moreover, given a vector of meromorphic solutions, it is possible to construct a Picard-Vessiot extention for $Y'=BY$ over $\C(x)$ that contains this vector.  If the vector is non-zero, we can even impose that the first column of the fundamental matrix $U$ is this vector solution.  

\begin{Remark}
A linear differential equation 
\begin{equation}\label{eq1}
a_p y^{(p)}+a_{p-1} y^{(p-1)}+\dots +a_0 y=0,  \qquad a_i\in \C[x],\quad a_p\neq 0,
\end{equation}
is equivalent to the differential system 
\begin{equation}\label{sys1}
Y'= \begin{pmatrix}
0&1&0&\cdots&0\\
0&0&1&\ddots&\vdots\\
\vdots&\vdots&\ddots&\ddots&0\\
0&0&\cdots&0&1\\
-a_{0}/a_p& -a_{1}/a_p&\cdots & \cdots & -a_{p-1}/a_p
\end{pmatrix} Y.
\end{equation}
By abuse of terminology,  we define the Picard-Vessiot extension for \eqref{eq1} over  $\C(x)$ as the Picard-Vessiot extension for \eqref{sys1} over $\C(x)$. The  singularities of the differential equation in $\C$ are the zeros of $a_p$ (when the entries are coprime).
\end{Remark}

We define  the differential Galois group of $Y' =BY$ over $\C(x)$ as the group of field automorphisms of $K$ that leave invariant $\C(x)$, and that commute with the derivation.  Let us denote it by $\mathrm{Gal}(K/\C(x))$.  
Since $K^{\partial}=\C$,
we have a faithfull representation,  
\begin{eqnarray*}
\Gal(K/\C(x)) & \rightarrow & \GL_{p}(\C) \\ 
  \sigma & \mapsto & U^{-1}\sigma(U).
\end{eqnarray*}
The latter representation  identifies $\Gal(K/\C(x)) $ with a linear algebraic subgroup $G\subset\GL_{p}(\C)$. The bigger this group is, the fewer algebraic relations there are in $K$.  This representation depends upon the choice of the fundamental solution $U$. Choosing another solution will lead to a conjugated representation.  Let us denote by $G$ the identification of $\Gal(K/\C(x))$ as an algebraic subgroup of $\GL_{p}(\C)$.  We define the   differential Galois group of \eqref{eq1} over $\C(x)$  as the differential Galois group of   \eqref{sys1} over $\C(x)$.
\par 
Let us now describe  the Galois correspondence that will be used in the sequel (see \cite[Proposition~1.34]{putsinger}). We use the above notations.
\begin{prop}\label{prop1}
Let $\mathcal{G}$ be the set of algebraic subgroups of the differential Galois group $G$ and let $\mathcal{F}$ be the set of differential subfields of $K$ containing $\C(x)$.  For $H\in \mathcal{G}$, let $K^H\in \mathcal{F}$ be the differential subfield of $K$ whose elements are invariant under the action of $H$. For $F\in \mathcal{F}$, let $\mathrm{Gal}(K/F)$  be the differential field automorphisms of $K$ leaving $F$ invariant. Then, 
the following holds. 
\begin{enumerate}
\item[--] The map $ H\mapsto  K^H$ defines a bijection between  $\mathcal{G}$  and $\mathcal{F}$. Its inverse is given by $F\mapsto \mathrm{Gal}(K/F)$. 
\item[--] Let $H\in \mathcal{G}$.  Then, $H$ is a normal subgroup of $G$ if and only if $F:=K^H$ is stable under the action of $G$. In that case  the map $G\rightarrow \mathrm{Gal}(F/\C(x))$ is surjective with kernel $H$.
\end{enumerate}
\end{prop}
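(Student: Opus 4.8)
The plan is to follow the classical Picard--Vessiot Galois correspondence, as in \cite[Proposition~1.34]{putsinger}: I would first install the algebraic backbone (the Picard--Vessiot ring and its torsor property) and then deduce the two assertions from it. Fix a fundamental solution $U\in\GL_p(K)$ and set $R:=\C(x)[U_{ij},\det(U)^{-1}]\subseteq K$, the associated Picard--Vessiot ring, so that $K=\mathrm{Frac}(R)$; recall that $R$ is a simple differential ring on which $G=\Gal(K/\C(x))$ acts by $\C(x)$-differential automorphisms. The key structural input I would establish is the torsor property: an isomorphism of $R$-algebras $R\otimes_{\C(x)}R\xrightarrow{\ \sim\ }R\otimes_{\C}\C[G]$, where $\C[G]$ is the coordinate ring of the algebraic group $G$. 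I would also record the elementary but constantly used principle that, for every intermediate differential field $\C(x)\subseteq F\subseteq K$, the field $K$ is again a Picard--Vessiot extension of $F$ for the same system (since $K=F(U)$ and $K^{\partial}=\C$), so that $\Gal(K/F)$ is an algebraic subgroup of $G$.

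The heart of the argument, and the step I expect to be the main obstacle, is the invariance lemma: for every Picard--Vessiot extension the fixed field of the full differential Galois group is the base field, i.e.\ $K^{\Gal(K/F)}=F$ for every intermediate $F$. Here I would work with the Picard--Vessiot ring of $K$ over $F$ and use the corresponding torsor isomorphism together with faithfully flat descent: for $a\in K$ invariant under $\Gal(K/F)$, the torsor structure translates invariance into the relation $a\otimes 1=1\otimes a$ in $K\otimes_F K$, and since $F\to K$ is faithfully flat this forces $a\in F$. This is the only place where the geometry of the $G$-torsor, rather than a formal manipulation of automorphisms, is genuinely used.

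With the invariance lemma in hand I would close the bijection. Writing $\Phi(H):=K^H$ and $\Psi(F):=\Gal(K/F)$, the lemma gives at once $\Phi(\Psi(F))=K^{\Gal(K/F)}=F$, so $\Phi\circ\Psi=\mathrm{id}$. For $\Psi\circ\Phi=\mathrm{id}$ I must show $\Gal(K/K^H)=H$ for every algebraic subgroup $H\leq G$. The inclusion $H\subseteq H':=\Gal(K/K^H)$ is clear, and the invariance lemma applied to $K/K^H$ yields $K^{H'}=K^H$. To upgrade this to $H=H'$ I would invoke Chevalley's theorem to realize $H$ as the stabilizer of a line $L$ in a finite-dimensional representation $V$ of $H'$, and then realize $V$, hence a generator $v$ of $L$, by elements of $K$ through the torsor identification $R\otimes_{\C}\C[G]$; such a $v$ lies in $K^{H}=K^{H'}$ yet is moved by every element of $H'\setminus H$, forcing $H'=H$. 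Combining the two composites gives the first assertion, that $H\mapsto K^H$ is a bijection with inverse $F\mapsto\Gal(K/F)$.

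For the second assertion I would start from the conjugation formula $\sigma(K^H)=K^{\sigma H\sigma^{-1}}$, valid for all $\sigma\in G$ by a direct check on invariants. By the injectivity of $H\mapsto K^H$ already proved, $K^H$ is stable under $G$ if and only if $\sigma H\sigma^{-1}=H$ for all $\sigma\in G$, i.e.\ if and only if $H\trianglelefteq G$. Assume now $H\trianglelefteq G$ and set $F:=K^H$. Each $\sigma\in G$ then restricts to a differential $\C(x)$-automorphism of $F$, giving a homomorphism $\mathrm{res}\colon G\to\Gal(F/\C(x))$ whose kernel is $\{\sigma\in G:\sigma|_F=\mathrm{id}\}=\Gal(K/K^H)=H$. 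To see that $F/\C(x)$ is itself a Picard--Vessiot extension I would note that $F$ is generated over $\C(x)$ by the $G$-stable finite-dimensional $\C(x)$-subspaces carrying the action, from which a defining system is built; the invariance lemma applied to $K/\C(x)$ then gives $F^{\mathrm{res}(G)}=F\cap K^{G}=F\cap\C(x)=\C(x)$. Since $\mathrm{res}(G)$ is a closed subgroup of $\Gal(F/\C(x))$ with fixed field $\C(x)=F^{\Gal(F/\C(x))}$, the bijection already established for the extension $F/\C(x)$ forces $\mathrm{res}(G)=\Gal(F/\C(x))$. Hence $\mathrm{res}$ is surjective with kernel $H$, yielding $G/H\cong\Gal(F/\C(x))$, which is the content of the normal-subgroup part.
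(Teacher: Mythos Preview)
The paper does not prove this proposition at all: it is stated as a classical result with a citation to \cite[Proposition~1.34]{putsinger} and used as a black box. Your proposal is a reasonable outline of the standard proof from that reference (torsor structure, the fixed-field lemma via faithfully flat descent, Chevalley's theorem for the reverse inclusion, and the conjugation formula for the normal case), so there is nothing to compare against beyond noting that you have supplied an argument where the paper deliberately omits one.
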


In the sequel we shall use the following definition and remark. 

\begin{Remark}\label{rem1}
When $G$ has the property that it has no normal algebraic subgroup other than itself and the identity,  it follows from the Galois correspondence of Proposition \ref{prop1} that the only subfields of $K$  stable under the action of $G$ are $K^G=\C(x)$ and $K^{\mathrm{Id}}=K$.  This fact will be a crucial part of the proof of Proposition \ref{prop3}. Note that such a property for $G$ is related to, but slightly differs from, the definition of {\em simple algebraic groups} in \cite[p.~168]{humphreys2012linear}. Recall that an algebraic group over a field is said to be simple if it is non-commutative and has no  connected  normal algebraic subgroups other than itself and the identity. For instance $\mathrm{SL}_2 (\C)$ is simple in this algebraic sense but does not satisfies the above assumption, since $\{\pm \mathrm{Id}\}$ is a (non connected) normal algebraic subgroup. On the other hand it is a classical result that the only proper  normal subgroups of $\mathrm{SL}_2(\C)$ consist of scalar matrices; see for instance the introduction of \cite{costa1988normal}. Hence, the only proper normal subgroup of $\mathrm{SL}_2(\C)$
is $\{ \pm \mathrm{Id}\}$. Since $\mathrm{PSL}_2 (\C)$ is isomorphic to $\mathrm{SL}_2(\C)/\{ \pm \mathrm{Id}\}$, it is simple as an abstract group and therefore 
 satisfies the assumption at the beginning of this remark.
\end{Remark}

The following technical result will be used in the proof of Theorem \ref{theo:N}.

\begin{lem}\label{prop4}
Let $Y'=AY$ a linear differential system with coefficients in $\C(x)$. Let $U$ be a fundamental solution with meromorphic functions entries.  Let $K=\C(x)(U)$ be the Picard-Vessiot extension and $G$ be the differential Galois group associated to this system. 

Let $\varphi\in \C(x)$ be non-constant. Then $U(\varphi(x))$ is a fundamental solution of $Y'=\varphi'(x)A(\varphi(x))Y$. The field $K_{\varphi}=\C(x)(U(\varphi(x)))$ is a Picard-Vessiot extension for $Y'=\varphi'(x)A(\varphi(x))Y$ over $\C(x)$ and the differential Galois group $G_{\varphi}$ over $\C(x)$ has the same dimension as $G$.
\end{lem}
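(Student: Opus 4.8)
\medskip

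The plan is to realise both Picard--Vessiot extensions concretely as fields of germs of meromorphic functions and to reduce the equality $\dim G_{\varphi}=\dim G$ to an equality of transcendence degrees. Fix $a\in\C$ that is neither a singularity of $Y'=AY$ nor equal to $\varphi(\infty)$; as recalled in \S\ref{sec:gal}, we may realise $K$ as a field of germs of meromorphic functions at $a$ with $U$ holomorphic and invertible near $a$. Since $\varphi^{-1}(a)\subset\C$ is non-empty, pick $b\in\varphi^{-1}(a)$: then $\varphi$ is holomorphic at $b$, with $\varphi(b)=a$, and, being non-constant, it defines an open map near $b$. Setting $V:=U(\varphi(x))$, the chain rule gives $V'=\varphi'(x)\,A(\varphi(x))\,V$, so $V$ is a matrix of meromorphic germs at $b$ solving the system $Y'=\varphi'(x)A(\varphi(x))Y$, which has coefficients in $\C(x)$; and $\det V=(\det U)(\varphi(x))$ is not the zero germ at $b$, because $\det U$ is a non-zero germ at $a$ and $\varphi$ is open near $b$. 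Hence $V$ is a fundamental solution, lying in $\GL_{p}(K_{\varphi})$ where $K_{\varphi}:=\C(x)(V)$.

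For the second assertion, $K_{\varphi}=\C(x)(V)$ is by construction generated over $\C(x)$ by the entries of the fundamental solution $V$, and, being a subfield of the field of germs of meromorphic functions at $b$, it has no new constants: any $f\in K_{\varphi}$ with $f'=0$ is a holomorphic germ with vanishing derivative, hence constant, so $K_{\varphi}^{\partial}=\C$. Therefore $K_{\varphi}$ is a Picard--Vessiot extension for $Y'=\varphi'(x)A(\varphi(x))Y$ over $\C(x)$, with differential Galois group $G_{\varphi}$.

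For the dimension, I would use the standard fact (see \cite{putsinger}) that for any Picard--Vessiot extension $L/\C(x)$ with differential Galois group $H$ one has $\dim H=\trd(L/\C(x))$; applied to $K$ and to $K_{\varphi}$ it reduces the claim to $\trd(K_{\varphi}/\C(x))=\trd(K/\C(x))=:d$. The assignment $\varphi^{*}\colon g\mapsto g(\varphi(x))$ is an injective field homomorphism on germs of meromorphic functions (since $\varphi$ is dominant) carrying $\C(x)$ isomorphically onto $\C(\varphi(x))$ and $K$ onto $\varphi^{*}K:=\C(\varphi(x))(V)$, so $\trd(\varphi^{*}K/\C(\varphi(x)))=\trd(K/\C(x))=d$. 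As $\C(\varphi(x))\subset\C(x)$ is a finite field extension, $\C(x)$ is algebraic over $\C(\varphi(x))$ and hence $K_{\varphi}=\C(x)(V)=\C(x)\cdot\varphi^{*}K$ is algebraic over $\varphi^{*}K$; additivity of transcendence degree then gives $\trd(K_{\varphi}/\C(x))=\trd(K_{\varphi}/\C(\varphi(x)))=\trd(\varphi^{*}K/\C(\varphi(x)))=d$, whence $\dim G_{\varphi}=d=\dim G$.

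The lemma is not deep: the only real computations are the chain rule and the transcendence-degree bookkeeping, and the sole external input beyond \S\ref{sec:gal} is the identity $\dim(\text{differential Galois group})=\trd(\text{Picard--Vessiot extension})$. The main point to be careful about is the concrete set-up at the start---choosing $a$ and $b$ so that $U(\varphi(x))$ is a genuine, invertible fundamental matrix of germs at $b$ generating a field with no new constants---and, in the dimension count, keeping the two base fields $\C(x)$ and $\C(\varphi(x))$ distinct.
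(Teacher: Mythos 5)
Your proof is correct and takes essentially the same route as the paper's: both verify that $K_{\varphi}$ is a Picard--Vessiot extension by realizing it as a field of meromorphic germs with field of constants $\C$, and both reduce $\dim G_{\varphi}=\dim G$ to an equality of transcendence degrees over the base field, invoking \cite[Corollary~1.30]{putsinger}. The only cosmetic difference is that the paper handles the non-invertibility of $\varphi$ by comparing transcendence degrees over $\overline{\C(x)}$, whereas you work with the finite extension $\C(x)/\C(\varphi(x))$; these are equivalent pieces of bookkeeping.
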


\begin{proof}
The field $K_{\varphi}=\C(x)(U(\varphi(x)))$ consists in meromorphic function on a certain domain. Its field of constants is $\C$ and therefore $K_{\varphi}$ is a Picard-Vessiot extension for $Y'=\varphi'(x)A(\varphi(x))Y$ over $\C(x)$. Every algebraic relation amongst elements of $K=\C(x)(U(x))$ corresponds to an algebraic relation amongst elements of $K_{\varphi}=\C(x)(U(\varphi(x)))$ by replacing $x$ with $\varphi(x)$. Since the inverse of $\varphi$ is algebraic, every algebraic relation amongst elements of $K_{\varphi}=\C(x)(U(\varphi(x)))$ corresponds to an algebraic relation amongst elements of $\overline{\C(x)}(U(x))$. This shows that $\overline{\C(x)}(U(x))$ and  $\overline{\C(x)}(U(\varphi(x)))$ have the same transcendence degree over $\overline{\C(x)}$. Hence $\C(x)(U(x))$ and $\C(x)(U(\varphi(x)))$ also have the same transcendence degree over $\C(x)$.~By \cite[Corollary~1.30]{putsinger}, the differential Galois groups of $Y'=A(x)Y$ and $Y'=\varphi'(x)A(\varphi(x))Y$  have the same dimension.
\end{proof}

\subsection{Proof of Proposition \ref{prop3}}\label{sec:22}

Let us prove Proposition \ref{prop3} stated in the Introduction.
The linear differential equation satisfied by $f$ is equivalent to a linear differential system $Y'=A_f Y$.
Let $U_f$ be a fundamental solution such that $K_f=\C(x)(U_f)$. Similarly, consider $Y'=A_{f_i}Y$ the differential system satisfied by $U_{f_i}$  and let $K_{f_i}=C(x)(U_{f_i})$.  Let us consider the linear differential system  $Y'=\mathrm{Diag}(A_f,A_{f_{1}},\dots,A_{f_{k}} )Y$. Because the entries of the matrices of the differential systems are in the field $\C(x)$, we may assume that the functions in the various Picard-Vessiot extensions all have a common domain of meromorphy.
A fundamental matrix solution of this system can be taken to be $\mathrm{Diag}(U_f,U_{f_{1}},\dots, U_{f_k})$ and  the field $L:=\C(x)(U_f,U_{f_{1}},\dots,U_{f_{k}})$ consists in meromorphic functions, which enables us to embed each of the fields $K_f,K_{f_{1}},\dots, K_{f_{k}}$ into the larger field $L$. Note that since $L$ consists in meromorphic functions, its field of constants is $\C$ and therefore $L$ is a Picard-Vessiot extension for $Y'=\mathrm{Diag}(A_f,A_{f_{1}},\dots,A_{f_{k}} )Y$ over $\C(x)$.\par 

 Up to replacing  $\{K_{f_{1}},\dots,K_{f_{k}}\}$ by a smaller set, we may reduce to the case where  a relation of the form $f\in \mathbb{C}(x)(g_1,\dots, g_k)$ with $g_i\in \{K_{f_{1}},\dots,K_{f_{k}}\}$ should involve each Picard-Vessiot extension $K_{f_i}$. The case $k=1$ is trivial. So let us assume that $k\geq 2$.\par 
Consider  the  system  $Y'=A_{2}Y$,  where $A_{2} =\mathrm{Diag}(A_{f_i})$ for all $2 \leq i \leq k$ such that $f_i\notin K_{f_1}$.  We define the Picard-Vessiot extension $K_{2}$ for $Y'(x)=A_{2} Y$ over the field $\C(x)$ as $\C(x)(U_{2})$, where $U_{2}:=\mathrm{Diag}(U_{f_i})$ for $f_i\notin K_{f_1}$. Let $G_{2}$ be the differential Galois group of the differential system $Y'=A_2Y$. \par 
Let us consider the linear differential system  $Y'=\mathrm{Diag}(A_f,A_{f_{1}},A_{2})Y$.
A fundamental matrix solution of this system can be taken to be $\mathrm{Diag}(U_f,U_{f_{1}},U_{2})$. By construction, we have that $\C(x)(U_f,U_{f_{1}},U_{2})\subset L$ is a Picard-Vessiot extension for $Y' =\mathrm{Diag}(A_f,A_{f_{1}},A_{2})Y$ over $\C(x)$; we denote by $G_{\mathrm{diag}}$ the corresponding differential Galois group. \\  \par 
 We shall prove the theorem as follows: assuming that $f\in \mathbb{C}(x)(f_1,\dots, f_k)$ is such that $K_f \not \subset  K_{f_i}$ for all $i$, and also that $G_f$ satisfies the assumptions of Proposition \ref{prop3}, then we shall conclude that $f\in \C(x)$.
By assumption, 
$K_f \not \subset K_{f_{1}}$. By the minimal property of $\{K_{f_{1}},\dots,K_{f_{k}}\}$, we also have $K_f \not \subset  K_{2} $. 

\medskip

The elements of $G_{\mathrm{diag}}$ are of the block diagonal  form $\mathrm{Diag}(C_f, C_{f_1},C_{2})$, where $C_f\in G_f$, $C_{f_1}\in G_{f_1}$,  $C_2\in G_2$.
Then, $G_{\mathrm{diag}}$ is contained inside $G_f \times G_{f_1}\times G_{2}$. It is important to note that $\C(x)(U_f)$  is stable under the image of $G_{\mathrm{diag}}$, so that  by Proposition \ref{prop1}, the projection map  $G_{\mathrm{diag}}\rightarrow G_f$  defined by $\mathrm{Diag}(C_f, C_{f_1},C_2)\mapsto C_f$ is surjective. This shows that any element of $G_f$ can be extended as an element of $G_{\mathrm{diag}}$. 
\par 
By  Proposition~\ref{prop1}, there exists an algebraic subgroup $\widetilde{H} \subset G_{\mathrm{diag}}$ such that $L^{\widetilde{H}}=\C(x)(U_{f_1})$. By definition, the action of $\widetilde{H}$ on the fundamental solution $\mathrm{Diag}(U_f,U_{f_1},U_2)$ leaves the  block $U_{f_1}$ invariant. So the elements of $\widetilde{H}$ are of the form $\mathrm{Diag}(C_f,\mathrm{Id},C_2)$, where $C_f\in G_f$, $C_2\in G_2$. 
Moreover by  Proposition~\ref{prop1}, since $\C(x)(U_{f_1})$  is stable under the image of $G_{\mathrm{diag}}$, $\widetilde{H}$ is  a normal subgroup of $G_{\mathrm{diag}}$.  Additionally, the elements $C_f$  in  $\mathrm{Diag}(C_f, \mathrm{Id} ,C_2)\in \widetilde{H}$ generate a subgroup $H \subset G_f$. 

Let us prove that $H$ is a normal subgroup of $G_f$.
Since $\widetilde{H}$ is a normal subgroup of $G_{\mathrm{diag}}$, we have for all $g=\mathrm{Diag}(C_f, C_{f_1},C_2)\in G_{\mathrm{diag}}$, $g\widetilde{H}=\widetilde{H}g$. Equating the first block and using the fact that the projection map $G_{\mathrm{diag}}\rightarrow G_f$ is surjective, 
we find $C_f H=HC_f$ for all $C_f \in G_f$, proving that $H$ is a normal subgroup of  the group $G_f$.  By assumption on the group $G_f$, there are two possibilities: either $H=G_f$ or $H=\{ \mathrm{Id} \}$. 

$\bullet$ If $H=\{\mathrm{Id}\}$, then the elements of $\widetilde{H}$ are of the form $\mathrm{Diag}(\mathrm{Id},\mathrm{Id},C_2)$. The action of $\widetilde{H}$ on the fundamental solution $\mathrm{Diag}(U_f,U_{f_1},U_2)$ then leaves the first block $U_f$ invariant. Since $K_f=\C(x)(U_f)$ this implies that $\widetilde{H}$ also  leaves $K_f$ invariant. Then $K_f \subset L^{\widetilde{H}}$. Since $L^{\widetilde{H}}=K_{f_1}$, this leads to the conclusion that we have $K_f \subset K_{f_1}$.  A contradiction. \par 
$\bullet$ Therefore,  $H=G_f$. This means that for all $C_f\in G_f$, there exists $C_2\in G_2$ such that  $\mathrm{Diag}(C_f,\mathrm{Id},C_2)\in G_{\mathrm{diag}}$. Similarly, taking the algebraic subgroup $\widetilde{H}' \subset G_{\mathrm{diag}}$ such that $L^{\widetilde{H}'}=\C(x)(U_2)$ we construct a normal subgroup $H'\subset G_f$ similar to $H$ that is either $\{\mathrm{Id}\}$  or $G_f$. In the first case, we deduce that  $K_f \subset K_2$ (a contradiction). Then, $H'=G_f$ and   
for all $C_f'\in G_f$, there exists $C_{f_1}\in G_{f_1}$ such that  $\mathrm{Diag}(C_f',C_{f_1},\mathrm{Id})\in G_{\mathrm{diag}}$. 
 \par 
Let $C_f,C_f'\in G$ and consider
$\mathcal{A}:=\mathrm{Diag}(C_f,\mathrm{Id},C_2),\mathcal{B}:=\mathrm{Diag}(C_f',C_{f_1},\mathrm{Id})\in G_{\mathrm{diag}}$. A calculation shows
$M:=\mathcal{A}\mathcal{B}=\mathrm{Diag}(C_f C_f',C_{f_1},C_2)$ and $M':=\mathcal{B}\mathcal{A}=\mathrm{Diag}(C_f'C_f ,C_{f_1},C_2).$ Then $M M'^{-1}=\mathrm{Diag}(C_f C_f'(C_f'C_f)^{-1},\mathrm{Id},\mathrm{Id})$. We claim that the latter matrix is the identity. 
 Let $\psi \in G_{\mathrm{diag}}$ that corresponds to $M M'^{-1}$ and let us prove  that for all $\sigma\in G_{\mathrm{diag}}$, $\psi$ fixes $\sigma(f)$. 
  Let $\sigma \in G_{\mathrm{diag}}$.   Since $f=P(f_1,\dots, f_k)$, with $P\in \C(x)(X_1,\dots,X_k)$, we have $\sigma(f)=P(\sigma(f_1),\dots,\sigma(f_k))$ with $\sigma (f_i)\in \{K_{f_1},K_2\}$. 
  Since $\psi$ induces the identity on $K_{f_1}$ and $K_2$, it follows that  $\psi(\sigma (f_i))=\sigma(f_i)$. Hence, 
  $$
  \psi\circ \sigma (f)=P(\psi\circ \sigma(f_1),\dots,\psi\circ\sigma(f_k))=P(\sigma(f_1),\dots,\sigma(f_k))=\sigma(f),
  $$ proving that $\psi$ fixes $\sigma(f)$.  
  Let $\widetilde{K}_f\subset K_f $ be the smallest differential field containing $\C(x)(f)$, such that $\sigma( \widetilde{K}_f)\subset \widetilde{K}_f$, for all $\sigma\in G_f$. 
  Given our assumption on the algebraic normal subgroups of $G_f$, we can now use Remark \ref{rem1} and deduce that either $\widetilde{K}_f=\C(x)$ or $\widetilde{K}_f=K_f$.
    Consider the case $\widetilde{K}_f=K_f$. From what precedes, for all $g\in \widetilde{K}_f$, we have  $\psi(g)=g$. This shows that $\psi$ is the identity on $\widetilde{K}_f=K_f=\C(x)(U_f)$. Then $\psi$ leaves the block $U_f$ invariant in $\mathrm{Diag}(U_f,U_{f_1},U_2)$. Since it also leaves $U_{f_1},U_2$ invariant, we deduce that $M M'^{-1}=\mathrm{Id}$ and $M=M'$ proving that $C_fC_f'=C_f'C_f$. Since $C_f,C_f'\in G_f$ are arbitrary,  this shows that   $G_f$ is commutative. This is a contradiction with the assumption that $G_f$ is non-commutative. 
 Consequently, $\widetilde{K}_f=\C(x)$ and this   implies that $f\in \C(x)$. The result is proved.
   
\section{The $G$-functions $\alpha$, $\beta$  and $\xi$}\label{sec:specialGfn}

In this section, we present in details the properties of the function mentioned in Theorem~\ref{theo:N}. We start with the generating series of the sequence of Ap\'ery's numbers, crucial in his proof in \cite{apery} of the irrationality of $\zeta(3)$: 
$$
\alpha (x):=\sum_{n=0}^\infty \left(\sum_{k=0}^n \binom{n}{k}^2\binom{n+k}{n}^2\right)x^n
\in \mathbb Z[[x]].
$$
It is a solution of the differential equation
\begin{multline}\label{eq:diffeqAp}
\quad x^2(1-34x+x^2)y'''(x)+x(3-153x+6x^2)y''(x)\\
+
(1-112x+7x^2)y'(x)+(x-5)y(x)=0. \quad 
\end{multline}
Since $\alpha$ has positive radius of convergence, it is thus a $G$-function. 
Ap\'ery observed in \cite[p.~53]{apery} that Eq.~\eqref{eq:diffeqAp} is the symmetric square of a differential equation of order 2, made explicit by Dwork in \cite[p.~6]{dwork}:
\begin{equation}\label{eq:4bis}
x(x^2-34x+1)y''(x)+(2x^2-51x+1)y'(x)+\frac14(x-10)y(x)=0.
\end{equation}
Moreover, $\alpha(x)=\beta(x)^2$ where $\beta\in \mathbb Q[[x]]$ is a $G$-function and the solution of Eq.~\eqref{eq:4bis} such that $\beta(0)=1$ and $\beta'(0)=5/2$.  Dwork also discussed the importance of this series in the genesis of the famous Bombieri-Dwork conjecture that ``$G$-functions come from geometry'', which he formulated in \cite[p.~2]{dwork}; a differential version of this conjecture was formulated later on by Andr\'e in \cite[p.~111]{andre}.

An expression of $\beta$ in terms of hypergeometric series exists. For this, let us define
$$
\tau(x):=\sum_{n=0}^\infty \left(\sum_{k=0}^n \binom{n}{k}^2\binom{2k}{k}^2\right)x^n.
$$
The main result in \cite{stienstrabeukers} implies that
$
\beta(\frac{x(1-9x)}{1-x})=(1-x)^{1/2}\tau(x)
$
and that
\begin{multline*}
\tau(x)=
\\
\big((3x-1)(3x^3-3x^2+9x-1)\big)^{-1/4} 
{}_{2}F_1 \left[ 
\begin{matrix}
1/12, 5/12
\\
1
\end{matrix};\frac{1728x^6(9x-1)(x-1)^3}{(3x^3-3x^2+9x-1)^3(3x-1)^3}
\right]. 
\end{multline*}
(See also \cite{beukers2} for related results.) 
It follows that $\beta(x)=\mu_0(x)\cdot{}_{2}F_1[1/12,5/12;1;\lambda_0(x)]$ where $\mu_0,\lambda_0 \in\Qbar[[x]]$ are algebraic over $\Qbar(x)$, and $\lambda_0(0)=0$. Hence, neither $\beta$ nor $\alpha$ provide a negative answer to Question~\ref{questionG} and more generally to Question~\ref{questionG'}. But we shall prove that their properties enable us to answer the weaker version of Question~\ref{questionG'} dealt with in Theorem~\ref{theo:N}. \par 

Kovacic's algorithm enables us to compute the differential Galois group of order two differential equations over $\mathbb C(x)$; see \cite{kova} and  \cite{van1999symbolic}. Starting from $\beta$, the first step is to perform a change of functions in order to transform \eqref{eq:4bis} into a differential equation with no term in $y'$. Following \cite[p.~6]{kova}, this ensures that the differential Galois group of the new equation is a subgroup of the unimodular group $\mathrm{SL}_2 (\C)$. 
\begin{prop} \label{prop:2} $(i)$ The function 
$
\nu(x):=x^{1/2}(x^2-34x+1)^{1/4}\beta (x)
$
is transcendental over $\mathbb C(x)$ and solution of the differential equation 
\begin{equation} \label{eq:5}
    4x^2(x^2-34x+1)^2y''(x)+(x^4-44x^3+1206x^2-44x+1)y(x)=0
\end{equation}
with differential Galois group $H_{\nu}$ equal to $\mathrm{SL}_{2}(\C)$. 

\smallskip

$(ii)$ The $G$-function $\xi(x):=\nu^2(x)=x(x^2-34x+1)^{1/2}\alpha (x)$ is transcendental over $\mathbb C(x)$  and the corresponding  differential Galois group $H_{\xi}$ is isomorphic to $\mathrm{PSL}_{2}(\C)$. Moreover, the points $(\sqrt{2}-1)^4$ and $(\sqrt{2}+1)^4$ are non-polar singularities of $\xi$.
\end{prop}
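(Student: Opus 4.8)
The plan is to prove part $(i)$ first (the heart of which is the computation of $H_\nu$) and then to deduce part $(ii)$ by passing to the symmetric square. I would begin by putting \eqref{eq:4bis} into reduced form $y''=r(x)y$. Since the coefficient of $y'$ in \eqref{eq:4bis}, divided by $x(x^2-34x+1)$, equals $\tfrac1x+\tfrac12\tfrac{(x^2-34x+1)'}{x^2-34x+1}$, the relevant integrating factor is $x^{-1/2}(x^2-34x+1)^{-1/4}$; hence the substitution $y=x^{-1/2}(x^2-34x+1)^{-1/4}z$ turns \eqref{eq:4bis} into \eqref{eq:5}, so that $\nu=x^{1/2}(x^2-34x+1)^{1/4}\beta$ is a solution of \eqref{eq:5}, with $r=-\tfrac{x^4-44x^3+1206x^2-44x+1}{4x^2(x^2-34x+1)^2}$. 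The equation \eqref{eq:5} is Fuchsian, with singular points exactly $0$, the two roots $(\sqrt2\pm1)^4$ of $x^2-34x+1$, and $\infty$. A local computation at each point — at $x=0$ the coefficient of $x^{-2}$ in $r$ is $-1/4$; at a root $c$ of $x^2-34x+1$ the numerator of $r$ reduces modulo $x^2-34x+1$ to $864\,c^2$, so $r\sim-\tfrac{3}{16}(x-c)^{-2}$; and the exponents at $\infty$ then follow from the Fuchs relation — yields the Riemann scheme: exponents $\{1/2,1/2\}$ at $0$, $\{1/4,3/4\}$ at each of $(\sqrt2\pm1)^4$, and $\{-1/2,-1/2\}$ at $\infty$.

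Next I would compute $H_\nu\subseteq\mathrm{SL}_2(\C)$ by eliminating every proper algebraic subgroup, in the spirit of Kovacic's algorithm. Because the two indicial roots at $x=0$ coincide, \eqref{eq:5} has a logarithmic solution there, so $H_\nu$ contains a nontrivial unipotent element; as neither the finite subgroups of $\mathrm{SL}_2(\C)$ nor the subgroups conjugate into the normalizer of a maximal torus contain such an element, the ``finite'' and ``imprimitive'' cases are excluded. To rule out the reducible case, observe that a reducible $H_\nu$ would force \eqref{eq:5} to have a solution $y$ with $y'/y\in\C(x)$; for a Fuchsian equation such a $y$ must be of the form $P(x)\prod_{c}(x-c)^{\rho_c}$ with $P\in\C[x]$ and each $\rho_c$ a local exponent at the finite singular point $c$, and comparing orders at $\infty$ gives $\deg P+\rho_0+\rho_{(\sqrt2-1)^4}+\rho_{(\sqrt2+1)^4}=1/2$. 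But $\rho_0=1/2$ and $\rho_{(\sqrt2\pm1)^4}\in\{1/4,3/4\}$ force the left-hand side to be $\ge1$, a contradiction. By the classification of algebraic subgroups of $\mathrm{SL}_2(\C)$ this leaves $H_\nu=\mathrm{SL}_2(\C)$. Since this is exactly the case with no Liouvillian solution, $\nu$ is transcendental over $\C(x)$, and hence so are $\nu^2=\xi$ and $\alpha$.

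For part $(ii)$, $\xi=\nu^2$ is a solution of the symmetric square $L^{(2)}$ of \eqref{eq:5}. As $H_\nu=\mathrm{SL}_2(\C)$, the representation $\mathrm{Sym}^2$ of the standard representation is irreducible of dimension $3$, so $L^{(2)}$ has order exactly $3$, is irreducible, and is therefore the minimal equation over $\C(x)$ satisfied by $\xi$. Its Picard--Vessiot extension is the subfield of that of \eqref{eq:5} fixed by the central — hence normal — subgroup $\{\pm\mathrm{Id}\}$ of $H_\nu$, so by the Galois correspondence of Proposition~\ref{prop1} one gets $H_\xi\cong\mathrm{SL}_2(\C)/\{\pm\mathrm{Id}\}\cong\mathrm{PSL}_2(\C)$. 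Finally, since the local exponents of \eqref{eq:5} at $c=(\sqrt2\pm1)^4$ are $1/4$ and $3/4$, every nonzero solution of \eqref{eq:5} — in particular $\nu$ — has there a local expansion $a(x-c)^{1/4}(1+\cdots)+b(x-c)^{3/4}(1+\cdots)$ with $(a,b)\ne(0,0)$; squaring, the leading exponent of $\xi$ at $c$ is $1/2$ (if $a\ne0$) or $3/2$ (if $a=0$), in neither case an integer, so $\xi$ is not meromorphic at $(\sqrt2\pm1)^4$: these points are non-polar singularities of $\xi$.

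The main obstacle is the determination of $H_\nu$: the reduction to $y''=ry$, the computation of the Riemann scheme, the symmetric-square argument and the branch-point analysis are all routine, whereas the elimination of the reducible, imprimitive and finite cases must be carried out with care — alternatively one may simply invoke an implementation of Kovacic's algorithm (for instance in Maple), which is the route we follow in the actual proof.
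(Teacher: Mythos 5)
Your proof is correct, and at the decisive steps it takes a genuinely different route from ours. We both reduce \eqref{eq:4bis} to the form $y''=ry$ and then run Kovacic's case analysis, but you do it from purely local data: you compute the full Riemann scheme (exponents $\{1/2,1/2\}$ at $0$, $\{1/4,3/4\}$ at $(\sqrt2\pm1)^4$, $\{-1/2,-1/2\}$ at $\infty$ --- your reduction of the numerator of $r$ to $864c^2$ modulo $x^2-34x+1$ and the resulting $-\tfrac{3}{16}(x-c)^{-2}$ are correct), exclude the reducible case by the exponent-sum argument at $\infty$, and exclude the dihedral and finite cases by the non-semisimple local monodromy at $0$; you then get the transcendence of $\nu$ for free from the absence of Liouvillian solutions in Case~4. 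We argue in the opposite order: we first prove transcendence via Schwarz's classification (through the Stienstra--Beukers ${}_2F_1$ formula for $\beta$), use it together with the fact that a $G$-function satisfying an order-one equation over $\Qbar(x)$ is algebraic to exclude Case~1, and use the logarithm at $0$ for Cases~2 and~3. One small point of care in your version: the local monodromy at $0$ is $-U$ with $U$ a nontrivial unipotent (the exponent is $1/2$), so strictly one should pass to its square to exhibit a nontrivial unipotent, or simply note that a non-semisimple element already rules out the finite and dihedral cases. For part $(ii)$, your identification of the Picard--Vessiot extension of the symmetric square with the fixed field of the normal subgroup $\{\pm\mathrm{Id}\}$, followed by the Galois correspondence, replaces our explicit $3\times3$ matrix computation; and your derivation of the non-polar singularities at $(\sqrt2\pm1)^4$ directly from the exponents $\{1/4,3/4\}$ (the half-integer part of $\nu^2$ cannot vanish since $f_1(c)\neq 0$) replaces our use of {\tt NumGfun} to compute local bases and connection constants. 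Your treatment is more self-contained and avoids both Schwarz's classification and certified numerics; ours outsources those two sub-steps to known results and software.
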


\begin{proof} $(i)$ We split the proof in several steps. 

\medskip

$\bullet$ The transcendence of $\nu$ can be proved in many ways. For instance, Schwarz' classification of algebraic/transcendental hypergeometric series ${}_2F_1[a,b;c;x]$ with rational parameters $a,b,c$ can be applied to the triplet $(a,b,c)=(1/12;5/12,1)$; see \cite[\S VII]{schwarz}.
This is thus also the case of the functions $\beta$ and $\nu$.
\medskip

$\bullet$ It could be checked directly that $\nu$ is solution of Eq.~\eqref{eq:5} starting from the differential equation of order 2 satisfied by $\beta$, but let us explain how it is obtained from Kovacic's method in \cite[p.~5]{kova}. 

We first perform in Eq.~\eqref{eq:4bis} the change of functions $y(x)=\gamma(x) z(x)$, where $\gamma$ will be determined below: we have 
$$
(\gamma'' z+2\gamma' z'+\gamma z'')+\frac{2x^2-51x+1}{x^3-34x^2+x}(\gamma'z+\gamma z')+\frac{1}{4}\frac{x-10}{x^3-34x^2+x}\gamma z=0. 
$$
After reordering this equation in the form $u(x)z''(x)+v(x)z'(x)+w(x)z(x)=0$, we see that in order to have $v(x)=0$, one has to solve the linear differential equation 
$$
\gamma'(x)  =-\frac{(2x^2-51x+1)}{2(x^3-34x^2+x)} \gamma(x).
$$
The general solution is given by $c\cdot x^{-1/2}(x^2-34x+1)^{-1/4}$ where $c\in \mathbb C$ is arbitrary: we now fix the solution $\gamma$ for which $c=1$. Since 
 $$\gamma''=\left(\left(\frac{-(2x^2-51x+1)}{2(x^3-34x^2+x)}\right)'+\left(\frac{-(2x^2-51x+1)}{2(x^3-34x^2+x)}\right)^2\right) \gamma,
 $$
 this enables us to identify the coefficients $u$ and $w$ above. We deduce that 
the function  $\nu:=\beta/\gamma$ is solution of 
$$z''=\left(-\frac{1}{4}\frac{(x-10)}{(x^3-34x^2+x)}+\frac{1}{4}\left(\frac{2x^2-51x+1}{x^3-34x^2+x}\right)^2 +\frac{1}{2}\left(\frac{2x^2-51x+1}{x^3-34x^2+x}\right)'\right)z,
$$
which simplifies to~\eqref{eq:5}. 

\medskip

$\bullet$   We now compute the differential Galois group of the equation~\eqref{eq:5}, that we shall denote by $H_{\nu}$. For this, we use Kovacic's algorithm~\cite{kova}
that computes the differential Galois group of differential equations of order 2 over $\mathbb C(x)$. By \cite[Page 6]{kova}, the differential Galois group $H_{\nu}$ of \eqref{eq:5} lies in $\mathrm{SL}_{2}(\C)$. In \cite[Section 2.1]{kova}, four cases are considered depending on which algebraic subgroup $H_{\nu}\subset \mathrm{SL}_{2}(\C)$ may occur. More precisely:
\begin{enumerate}
\item[--] Case $1$ corresponds to a differential Galois group conjugated to a group of triangular matrices.
\item[--] Case $2$ corresponds to a differential Galois group not in Case $1$, but  conjugated to a subgroup of the  dihedral group.
\item[--] Case $3$ corresponds to a differential Galois group not in Cases $1$ and $2$, and finite.
\item[--] Case $4$ corresponds to the remaining situations, that is when the differential Galois group is $\mathrm{SL}_{2}(\C)$.
\end{enumerate}
We shall prove that we are in Case 4.
Let 
$$
r(x):=-\frac{x^4-44x^3+1206x^2-44x+1}{4x^2(x^2-34x+1)^2}
$$
so that \eqref{eq:5} is simply $y''=ry$. 

\smallskip

-- We first exclude Case $1$. For this, we convert \eqref{eq:5} into an equation where $d/dx$ is replaced by $\delta:=xd/dx$. 
Then $y''=ry$ becomes  $\delta^2 y=\delta y+x^2 r(x)y$, that we write as 
the differential system $\delta Y(x)=A(x)Y(x)$ where $Y={}^t(y,\delta y)$,
\begin{equation}\label{eq:5b}
A(x):= \left(\begin{array}{cc}
0&1\\x^2 r(x) &1
\end{array}\right).
\end{equation}
Since $r(x)=-1/(4x^2)+ \mathcal{O}(x^{-1})$ as $x\to 0$, we have 
$$
A(0)=\left(\begin{array}{cc}
0&1\\ -1/4 &1
\end{array}\right),
$$ 
and this implies that $\delta Y=AY$ has a singularity of the first kind at $x=0$, see \cite[Chapter~2]{balser2008formal} (other authors say that the system is regular singular). 
Let us now compute the Jordan normal form of $A(0)$; its characteristic polynomial is $(\lambda-\frac{1}{2})^2$, so that $\frac{1}{2}$ is the only eigenvalue. Thus, the eigenspace has dimension one because the rank of 
$$
\left(\begin{array}{cc}
-1/2&1\\ -1/4 & 1/2
\end{array}\right)
$$ 
is one. Consequently, the Jordan normal form of $A(0)$ is  
$$\left(\begin{array}{cc}
1/2 &1\\ 0 & 1/2
\end{array}\right)
$$ 
and by \cite[Section~2.1]{balser2008formal} a fundamental matrix solution of \eqref{eq:5b} is given by $$x^{1/2}F(x)\left(\begin{array}{cc}
1&\log(x)\\0 &1
\end{array}\right)\!,
$$ 
where $F\in \mathrm{GL}_2 (\C(\{x\}))$, and $\C(\{x\})$ is the field of germs of meromorphic functions at $0$. 

Then, a basis of solutions of \eqref{eq:5} is given by $\nu(x)$ and $\nu(x)\log(x)+x^{1/2} g(x)$, where $g\in \C(\{x\})$. Assume now that Case $1$ holds. By \cite[Page 9]{kova}, there exists a non-zero solution of \eqref{eq:5} that is solution of a linear differential equation order 1 over $\mathbb C(x)$.  Let 
$$
c_1 \big(\nu (x)\log(x)+x^{1/2}g(x)\big)+c_2 \nu(x)
$$ 
be such a solution with $c_1,c_2\in \C$ not both zero. Recall that $\log (x)$ is transcendental over the field of Puiseux series at $0$. Then, equating the terms in $\log (x)$, we find that necessarily there exists a rational function  $h$ such that  $\nu' (x)=h(x)\nu (x)$. Therefore, there exists a rational function $\tilde{h}$ such that $\beta' (x)=\tilde{h} (x) \beta (x)$, and $\tilde{h}$ is necessarily in $\Qbar(x)$ because $\beta\in \Qbar[[x]]$. Since $G$-functions solutions of order 1 differential equations over $\Qbar(x)$ are algebraic (see for instance \cite[\S2, Proposition~3]{firi2}), we deduce that $\beta$ is algebraic, and $\nu$ as well. This is in contradiction with the first step proved above. Hence, Case 1 does not hold.
  
\medskip  
  
-- As explained in \cite[Section 2.1]{kova}, when we are in Case $2$, then 
$$
\big(\nu(x)\log(x)+x^{1/2}g(x)\big)^2\cdot \big(\nu(x)\big)^2\in \mathbb{C}(x).
$$ 
Using again the transcendence of $\log$ over the field of Puiseux series at $0$, we deduce that this is not possible. Then Case $2$ does not hold.

\medskip

 -- The function $\nu(x)$ is transcendental, so that the differential Galois group can not be finite. Hence,  Case $3$ is also excluded.
 
 \medskip
 
 -- Therefore, we are in Case $4$. This means that $H_{\nu}=\mathrm{SL}_{2}(\C)$. 
 
\bigskip

$(ii)$ The differential Galois group $H_{\nu}=\mathrm{SL}_2 (\C)$  does not satisfy the assumption of the group $G_f$ in Proposition \ref{prop3}. To handle this problem, we replace $\nu$ by its square. 
Then, we need to compute the symmetric square of \eqref{eq:5}.
Recall that $\nu$ is solution of an equation of the form $y''=ry$ with $r\in \C(x)$. Let $y_1,y_2$ be solutions of $y''=ry$ and let us compute a differential equation for  $z:=y_1 y_2$.  We have  $z'=y_1'y_2+y_1y_2'$, and $$z''=2(y_1'y_2')+y_1y_2''+y_1''y_2=2(y_1'y_2')+2r y_1 y_2=2(y_1'y_2')+2rz.$$  
Finally, $$z'''=2y_1'y_2''+2y_1''y_2'+2r'z+2rz'=2ry_1'y_2+2ry_1y_2'+2r'z+2rz'.$$  
Hence the corresponding differential equation  for $z$ is 
\begin{equation}\label{eq7}
z'''=4rz'+2r'z.
\end{equation}
Recall  
that a basis of solutions of \eqref{eq:5} is formed by $\nu(x)$ and  $\mu(x):=\nu(x)\log(x)+x^{1/2}g(x)$. Then a basis of solutions of \eqref{eq7} is formed by  $\nu^{2}$, $2\nu\mu$ and $\mu^{2}$. 

\medskip

$\bullet$ The $G$-function $\xi=\nu^{2}$ is transcendental because $\nu$ is.  Let $\sigma$ be an element of the differential Galois group $H_{\nu}$ of \eqref{eq:5} and let $$
\left(\begin{matrix}
  a   &b  \\
c     & d
\end{matrix} \right)\in \mathrm{SL}_2 (\C)$$ 
be the corresponding matrix. We have $\sigma(\nu)=a\nu+c\mu$ and $\sigma(\mu)=b\nu+d\mu$, so that
\begin{align*}
\sigma (\nu^{2})&=a^2\nu^{2}+ac (2\nu \mu)+c^2\mu^{2},\\
 \sigma (2\nu\mu)   &=   2ab\nu^{2} +(ad+bc) (2\nu\mu)+2cd \mu^{2},\\
 \sigma (\mu^{2}) &=    b^2\nu^{2}+bd (2\nu\mu)+d^2 \mu^{2}.
\end{align*}
Hence, the differential Galois group $H_{\xi}$ of the order three differential equation with a basis of solutions given by $\nu^{2}$, $2\nu\mu$, $\mu^{2}$ is isomorphic to the group of matrices defined by
$$\left(\begin{matrix}
  a^2   &2ab & b^2  \\
ac     & ad+bc & bd \\
c^2& 2cd& d^2
\end{matrix} \right), \quad ad-bc=1.$$
As explained in \cite[p.~13]{singer1993galois}, this group is isomorphic to $\mathrm{PSL}_{2}(\C)$.

\medskip

$\bullet$ Let us prove that $(\sqrt{2}+ 1)^4$ and $(\sqrt{2}- 1)^4$ are non-polar singularities of $\xi$. We shall use Mezzarobba's Maple package {\tt NumGfun} \cite{mezzarobba} for the function $\nu$.~(\footnote{The same commands could be executed directly on the order 3 differential equation satisfied but $\xi$; but the  execution time is much smaller for the order 2 equation satisfied by $\nu$, and the results are then easily transfered to $\xi$ as we do.}) The command {\tt local${}_{-}$basis} 
of this  package shows that a local basis of solutions of \eqref{eq:5} at $x_{\pm}:=(\sqrt{2}\pm 1)^4$ is given by
\begin{align*}
g_{1,\pm}(x)&=(x-x_{\pm})^{1/4} + \Big(6\mp  \frac{271}{64}\sqrt{2}\Big)(x-x_{\pm})^{5/4}+\ldots \in (x-x_{\pm})^{1/4}\Qbar[[x-x_{\pm}]],  
\\
g_{2,\pm}(x)&=
(x-x_{\pm})^{3/4} + \Big(2\mp\frac{271}{192}\sqrt{2}\Big)(x-x_{\pm})^{7/4}+\ldots \in (x-x_{\pm})^{3/4}\Qbar[[x-x_{\pm}]].
\end{align*}
Since $\nu$ is a combination of $g_{1,\star}(x)$ and $g_{2,\star}(x)$ this shows that  $(\sqrt{2}+ 1)^4$ and $(\sqrt{2}- 1)^4$ are non-polar singularities of $\nu$, but this is not sufficient to immediately get the same conclusion for $\xi$. For this, we proceed as follows. 
The analytic continuation of the function $\nu$ is then connected to these bases (locally around $x=x_{\pm}$)  by
$\nu= c_{1, \pm}\cdot g_{1,\pm} + c_{2,\pm} \cdot g_{2,\pm}$,  where the constants $c_{1,\pm},c_{2,\pm}$ can be computed by the command {\tt transition${}_{-}$matrix}:  
\begin{align*}
    c_{1,+}&\approx -0.4827+0.5912i, \quad 
    c_{2,+}\approx 0.1882+0.2304i
   \\
    c_{1,-}&\approx 1.4068+1.4068i, \quad 
    c_{2,-}\approx 0.5484+0.5484i.
\end{align*}
Taking the square, we deduce that the analytic continuation of the function $\xi$ satisfies around $x=x_{\pm}$: 
\begin{multline*}
\xi(x)=c_{1, \pm}^2 \cdot g_{1,\pm}(x)^2 + 2c_{1, \pm} c_{2,\pm}\cdot g_{1,\pm}(x)g_{2,\pm}(x)  + c_{2,\pm}^2\cdot g_{2,\pm}^2(x)\\ 
\in (x-x_{\pm})^{1/2}\Qbar[[x-x_{\pm}]]+\Qbar[[x-x_{\pm}]]
\end{multline*}
where the series in $(x-x_{\pm})^{1/2}\Qbar[[x-x_{\pm}]]$ is non-zero. This proves that $\xi$ has singularities of non-polar type at $x_{+}$ and $x_{-}$.

\medskip

Though this is not needed for our goals, let us also mention that $\xi$  has a singularity at $\infty$ of non-polar type. We proceed a bit differently because, even though a basis of solutions could be computed with {\tt local${}_{-}$basis} by changing $x$ to $1/x$ in Eq.~\eqref{eq:5}, the package does not compute the transition matrix between 0 and $\infty$ directly. Alternatively, one can then use instead the command {\tt analytic${}_{-}$con\-tinuation},  applied to $\nu(x)/x^{1/2}$. This enables us to compute the analytic continuation of $\nu$ around a loop enclosing $\infty$. We get that $\nu$ is transformed to $d_1 \nu+d_2 \omega$, where $\omega(x)=x^{1/2}\log(x)+x^{3/2}(12-6\log(x))+\ldots$ is another local solution at $x=0$ and independent of $\nu$, and $d_1\approx 8034+2229i, d_2\approx -102i$. Hence $\xi$ is transformed to $(d_1 \nu+d_2 \omega)^2$, and we deduce that $\xi$ has a singularity at $\infty$ of non-polar type.
\end{proof}

\section{Proofs of Theorem \ref{theo:N}, Corollary \ref{coro:N} and Corollary \ref{coro:new}}\label{secfin}

\subsection{Proof of Theorem \ref{theo:N}}

Let us fix $N\in \N$.  Let us consider the differential system 
$$
Y'=\left(\begin{matrix}
0&1\\
r&0
\end{matrix}\right) Y=:AY
$$ 
satisfied by the column vector $(\nu,\nu')^\top$, where $r$ is as in the proof of Proposition~\ref{prop:2}. Let $\mathcal{V}$ be a fundamental solution of $Y'=AY$ with $(\nu,\nu')^\top$ as first column. Since  the differential Galois group $G_{\nu}$ of the system is $\mathrm{SL}_2 (\C)$, $\det(\mathcal{V})$ is invariant under the differential Galois group. By the  Galois correspondence (Proposition~\ref{prop1}), $\det(\mathcal{V})\in \C(x)$.
Let $\varphi\in \C(x)$ with $\varphi(0)=0$, and non-constant. Then, $\mathcal{V}(\varphi(x))$ is a fundamental solution of $Y'=\varphi'(x) A(\varphi(x))Y$. Furthermore, $\det(\mathcal{V}(\varphi(x))\in \C(x)$ proving that the corresponding differential Galois group  $G_{\nu\circ\varphi}$  is an algebraic subgroup of $\mathrm{SL}_2 (\C)$ by the Galois correspondence again.  By Lemma~\ref{prop3}, the dimensions of $G_{\nu}$ and $G_{\nu\circ\varphi}$ are equal and since $G_{\nu}=\mathrm{SL}_2 (\C)$, $G_{\nu\circ\varphi}$ has dimension~$3$. Since $\mathrm{SL}_2 (\C)$  is the only algebraic subgroup of $\mathrm{SL}_2 (\C)$ of dimension $3$, we deduce that $G_{\nu\circ\varphi}=\mathrm{SL}_2 (\C)$. Proceeding as in the proof of  Proposition \ref{prop:2}, we find that the differential Galois group of the equation corresponding to $\xi \circ \varphi$
is $\mathrm{PSL}_2 (\C)$.  The latter group satisfies the assumptions on $G_f$ in Proposition \ref{prop3}. Since $\xi$ is transcendental it does not belong to $\C(x)$  and therefore admits at least one non-polar singularity in $\mathbb{P}^1 (\C)$.  Choose  $\varphi_N \in  \mathbb Q(x)$ with $\varphi_N (0)=0$ (in particular  $\varphi_N\in\mathbb \Q[[x]]$), such that the  $G$-function 
$\xi_N=\xi\circ \varphi_N$ has at least $N+1$ non-polar singularities in $\mathbb{P}^1 (\C)$. Furthermore, it is solution of a linear differential equation  with a differential Galois group that satisfies the assumptions on $G_f$ in Proposition~\ref{prop3}.

Recall that $ \mathcal{K}_{N}$ is the field of  rational functions with coefficients in $\overline{\C(x)}$ of solutions of linear differential equations with coefficients in $\mathbb C(x)$ and at most $N$ singularities in $\mathbb{P}^1 (\mathbb C)$. 
To the contrary, assume that  
$\xi_N\in  \mathcal{K}_{N}$. Then it is a rational function with coefficients in $\C(x)$ of solutions of linear differential equations with coefficients in $\mathbb C(x)$ and at most $N$ singularities in $\mathbb{P}^1 (\mathbb C)$ and of algebraic functions.
By Proposition \ref{prop3}, $\xi_N$ belongs either to the Picard-Vessiot extension corresponding to  an algebraic function or to the Picard-Vessiot extension corresponding to a  function solution of a linear differential equation with at most $N$ singularities in $\mathbb{P}^1 (\mathbb C)$. In the first case,  the Picard-Vessiot extension of the algebraic function contains only algebraic functions, and then $\xi_N$ is algebraic.  This contradicts the fact that $\xi_N$ is transcendental.
So the second case holds and any element of the Picard-Vessiot extension is meromorphic at any point distinct from the potential $N$ singularities. Since $\xi_N$ has at least $N+1$ non-polar singularities, we get a contradiction. This concludes the proof of Theorem \ref{theo:N} by proving that $\xi_N\notin  \mathcal{K}_{N}$. 

\subsection{Proof of Corollary \ref{coro:N}}

Let us prove Corollary \ref{coro:N}. Let $M,\widetilde{M}\in \N^*$ and  consider  $\xi_{\widetilde{M}}$ as defined in the proof of Theorem \ref{theo:N}. Assume that $\xi_{\widetilde{M}}\in  \mathcal{F}_{M}$.
Again, $\xi_{\widetilde{M}}$ belongs either to  the Picard-Vessiot extension of an algebraic function (and this is not possible since $\xi_{\widetilde{M}}$ is transcendental),  or to the Picard-Vessiot extension corresponding to a differential equation of the form 
$_p\mathcal{H}_{p-1}[\underline{a};\underline{b}; \lambda]$ with $p \ge 1$, $a_j\in \mathbb Q$,  $b_j\in \mathbb Q\setminus \mathbb Z_{\le 0}$, $\lambda\in {\mathbb C} (x)$ of  numerator and denominator of degree bounded by $M$. The latter differential equation is of order at most $p$, has coefficients in ${\mathbb C}(x)$ and its singularities are amongst the solutions of the three equations $\lambda(x)=\kappa$ where $\kappa\in \{0,1,\infty\}$. Therefore, an element of the Picard-Vessiot extension has at most $3M$ non-polar singularities. Consequently, the transcendental function $\xi_{3M+1}$ does not belong to $\mathcal{F}_{M}$. 

\subsection{Proof of Corollary \ref{coro:new}}

To the contrary, assume that $\xi$ can be written as a polynomial in such functions. 
By Proposition \ref{prop3}, the transcendental function  $\xi$ belongs to the Picard-Vessiot extension of a differential equation of the form  $_p\mathcal{H}_{p-1}[\underline{a};\underline{b}; \lambda]$, where $\lambda(x):=\delta x^m$, $m\neq 0$.
 Now, any singularity $x_0$ of the equation  $_p\mathcal{H}_{p-1}[\underline{a};\underline{b};\lambda]$ must be such that $\delta x_0^m\in \{0,1,\infty\}$.  In particular, all the non-polar singularities of $\xi$ in $\C^*$ must be of modulus $|\delta|^{-1/m}$. But by 
 Proposition~\ref{prop:2}$(ii)$, $\xi$ has two non-polar singularities at $(\sqrt{2}- 1)^4$ and $(\sqrt{2}+ 1)^4$ which are of different modulus. This contradiction concludes the proof.

\bigskip

\noindent Thomas Dreyfus, Institut de Mathématiques de Bourgogne, UMR 5584 CNRS, Université de Bourgogne, F-21000, Dijon, France\\
thomas.dreyfus (at) math.cnrs.fr

\medskip

\noindent Tanguy Rivoal, Institut Fourier, Universit\'e Grenoble Alpes, CNRS, CS 40700, 38058 Grenoble cedex 9, France\\
tanguy.rivoal (at) univ-grenoble-alpes.fr

\bigskip

\noindent Keywords: $G$-functions, Hypergeometric functions, Differential Galois groups, Kovacic's algorithm.

\medskip

\noindent MSC 2020: 33C20, 33E20, 34M15, 34M35.

\end{document}